   \edef\Gin@extensions{\Gin@extensions,.mps}
\numberwithin{equation}{section}
\newtheorem{thm}{Theorem}
\newtheorem{prop}[thm]{Proposition}
\newtheorem{lemma}[thm]{Lemma}
\newtheorem{cor}[thm]{Corollary}
\newtheorem{con}{Question}
\newtheorem{conj}{Conjecture}
\newtheorem{definition}[thm]{Definition}
\newtheorem{remark}[thm]{Remark}
\theoremstyle{definition}
\newtheorem{ex}[thm]{Example}
\definecolor{RED}{rgb}{0.6,0,0}
\newcommand{\N}{\mathbb{N}}
\newcommand{\R}{\mathbb{R}}
\newcommand{\C}{\mathbb{C}}
\newcommand{\Sn}{\mathcal{S}_n}
\newcommand{\ff}{\text{f}}
\DeclareMathOperator{\sgn}{sgn}
\DeclareMathOperator{\Van}{Van}
\DeclareMathOperator{\CStab}{CStab}
\DeclareMathOperator{\RStab}{RStab}
\DeclareMathOperator{\Sym}{sym}
\DeclareMathOperator{\Kern}{Ker}
\DeclareMathOperator{\Hom}{Hom}
\DeclareMathOperator{\vol}{vol}
\numberwithin{thm}{section}
\newtheoremstyle{break}  
  {3pt}   
  {11pt}   
  {\normalfont}  
  {0pt}       
  {\scshape} 
  {}         
  {4pt}  
  {}          
\theoremstyle{break}
\begin{document}
\title{Symmetric nonnegative forms and sums of squares}

\author{Grigoriy Blekherman}
\address{School of Mathematics,
Georgia Tech,
686 Cherry Street,
Atlanta, GA 30332}

\author{Cordian  Riener}
\address{Department of Mathematics and Statistics, 
UiT -- The Arctic University of Norway, 9037 Troms\o}

\begin{abstract}

We study symmetric nonnegative forms and their relationship with symmetric sums of squares. For a fixed number of variables $n$ and degree $2d$, symmetric nonnegative forms and symmetric sums of squares form closed, convex cones in the vector space of $n$-variate symmetric forms of degree $2d$. Using representation theory of the symmetric group we characterize both cones in a uniform way.
Further, we investigate the asymptotic behavior when the degree $2d$ is fixed and the number of
variables $n$ grows.  Here, we  show that, in sharp contrast to the general
case, the difference between symmetric nonnegative forms and sums of squares does
not grow arbitrarily large for any fixed degree $2d$. 
We consider the case of symmetric quartic forms in more detail and give a complete characterization of quartic symmetric sums of squares. Furthermore, we show that in degree $4$ 
the cones of nonnegative symmetric forms and symmetric sums of squares approach the same limit, thus these two cones asymptotically become closer as the number of variables grows. 
We conjecture that this is true in
arbitrary degree $2d$.
\end{abstract}

\maketitle
\section{Introduction}

Throughout the paper let $\R[X_1,\ldots, X_n]$ denote the ring of polynomials in $n$ real
variables and $H_{n,k}$ the set of homogeneous polynomials (forms) of degree $k$ in $\R[X_1,\ldots, X_n]$. Certifying that a form
$f\in H_{n,2d}$ assumes only nonnegative values is one of the fundamental
questions of real algebra. One such possible certificate is a
decomposition of $f$ as a sum of squares, i.e., one finds forms  $p_1,\ldots
p_m\in H_{n,d}$ such that $f=p_1^2+\ldots+p_m^2.$  In 1888 Hilbert  \cite{Hilbert} gave a
beautiful proof showing that in general not all  nonnegative forms can be
written as a sum of squares. In fact, he showed that the sum of squares
property only characterizes nonnegativity in the cases of binary forms, of
quadratic forms, and of ternary quartics. In all other cases there exist forms
that are non-negative but do not allow a decomposition as a sum of squares.
Despite its elegance, Hilbert's proof was not constructive. A constructive approach to Hilbert's proof appeared in an article by Terpstra \cite{Ter} 
in 1939,  but the first explicit example was found by Motzkin in 1965 \cite{motz} and an explicit example based on Hilbert's method was constructed by Robinson in 1969 \cite{robin}. We refer the interested reader to \cite{scheiderer2009positivity,reznick2000some} for more background on this topic.

The sum of squares decomposition of nonnegative polynomials has been the
cornerstone of recent developments in polynomial optimization.
Following ideas of Lasserre and Parrilo, polynomial optimization
problems, i.e. the task of finding $f^{*}=\min f(x)$ for a polynomial
$f$,
can be relaxed and transferred into semidefinite optimization
problems.  If $f-f^{*}$ can be written as a sum of squares, these
semidefinite relaxations
are in fact exact. Hence a better understanding of the difference of
sums of squares and nonnegative polynomials is highly desirable.

We study the case of forms in $n$ variables of degree $2d$ that are symmetric, i.e., invariant under the
action of the symmetric group $\mathcal{S}_n$ that permutes the
variables. Let $\R[X_1,\ldots, X_n]^S$ denote the ring of symmetric polynomials and $H^S_{n,2d}$ denote the real vector space of symmetric
forms of degree $2d$  in $n$ variables. Let $\Sigma_{n,2d}^S$ denote
be the cone of forms in $H^S_{n,2d}$ that can be decomposed as sums of
squares and $\mathcal{P}^S_{n,2d}$ be the cone of non-negative
symmetric forms.
Choi and Lam \cite{choi} showed that the following symmetric form of degree~4 in $4$ variables is non-negative but cannot be written as a sum of squares:
\[ \sum X_i^2X_j^2+\sum X_i^2X_jX_k-4X_1X_2X_3X_4.\]

Thus one can conclude that  $\Sigma_{4,4}^S\neq\mathcal{P}^S_{4,4}$ and
therefore even in the case of symmetric polynomials the sum of
squares property  already fails to characterize nonnegativity in the first
case covered by Hilbert's classical result.  These results have been recently extended by Goel, Kuhlmann and Reznick \cite{gkr} into a full characterization of equality cases between $\Sigma_{n,2d}^S$ and $\mathcal{P}_{n,2d}^S$.  Unfortunately, there are no other interesting cases of equality beyond those covered by Hilbert's Theorem.

The case of even symmetric forms has also received some attention. Choi, Lam and Reznick \cite{CLR} fully described the cones of even symmetric sextics in any number of variables, and showed that under some normalization these cones have the same limit as the number of variables grows. Harris \cite{harris} showed that even symmetric ternary octics are non-negative, only if they are sums of squares, providing a new interesting case of equality between nonnegative polynomials and sums of squares. Goel, Kuhlmann and Reznick \cite{gkr2} showed that there are no other interesting cases of equality beyond Harris' and Hilbert's results for even symmetric forms.

Additionally to  the qualitative statement of Hilbert's characterization, a quantitative understanding of the gap between sums of squares and nonnegative forms
has been studied by several authors. In particular, in \cite{B1} the first author added to the work of Hilbert by showing
that the  gap between sum of squares and nonnegative forms of fixed degree grows
infinitely large with the number of variables if the degree is at
least $4$. This result has been recently been refined by Ergur to the multihomogenous case \cite{erg}. In this article we study the relationship between symmetric sums of squares and symmetric nonnegative forms. In particular, we are interested in the asymptotic behavior of the cones, which we can  realize for example  as  symmetric  mean inequalities naturally associated to a
symmetric polynomial. The study of such symmetric inequalities has a
long history  (see for example \cite{CGS}) and it is an interesting question to ask when one can use sum of squares certificates to verify such an inequality.   For instance,   Hurwitz \cite{hur} showed that
a sum of squares decomposition can be used to verify the arithmetic mean-geometric mean inequality. Recently, Frenkel and Horv\'ath \cite{frenkel} studied the connection of Minkowski's inequality to sums of squares. 
Our results imply that a positive fraction of such inequalities come from sums of squares symmetric polynomials. 
Furthermore, in degree $4$ we show that a family of symmetric power mean inequalities is valid for all
$n$ if and only if each member can be written as a sum of squares. We conjecture that this holds for all degrees.
\section{Overview and main results}
\subsection{Symmetric sums of squares}
Symmetric polynomials are  classical objects in algebra. In order to represent symmetric polynomials, we will make use of the power sum polynomials.
\begin{definition}\label{def:ps}
 For $i\in\N$  define $$P_{i}^{(n)}:=X_1^i+\ldots+X_n^i$$ to be the $i$-th power sum polynomial. We will also work with the  power means: $$p_i^{(n)}:=\frac{1}{n}{P_i}^{(n)}.$$

\end{definition}
 It is known (for example \cite[2.11]{macdo}) that $\R[X_1,\ldots, X_n]^S$ is freely generated by the algebraically independent polynomials $P_1^{(n)},\ldots,P_n^{(n)}$. Hence it follows that every symmetric polynomial $f\in \R[X_1,\ldots, X_n]^S$ of degree $2d\leq n$ can uniquely be
written as $$f=g(P^{(n)}_1,\ldots,P^{(n)}_{2d})$$ for some polynomial $g\in\R[z_1,\ldots,z_{2d}]$, with $\deg_w g=\deg f$, where  $\deg_w$ denotes the weighted degree corresponding to the weight $(1,\ldots,2d)$.
Recall that for a natural number $k$ a partition $\lambda$ of $k$
(written $\lambda\vdash k$)
is a sequence of weakly decreasing positive integers
$\lambda=(\lambda_1,\lambda_2,\ldots,\lambda_l)$ with
$\sum_{i=1}^l\lambda_i=k$.
For $n\geq k$ and to a partition $\lambda=(\lambda_1,\ldots,\lambda_l)\vdash k$ we associate 
polynomials \[P_\lambda^{(n)}:=P_{\lambda_1}^{(n)}\cdot
P_{\lambda_2}^{(n)}\cdots P_{\lambda_l}^{(n)}, \,\,\,\,\,\text{and} \,\,\,\,\, p_\lambda^{(n)}:=p_{\lambda_1}^{(n)}\cdot
p_{\lambda_2}^{(n)}\cdots p_{\lambda_l}^{(n)}.\]
It now follows  that for every $n\geq k$ the families  of polynomials  $\left\{P_\lambda\,|\, \lambda \vdash
k\right\}$ as well as $\left\{p_\lambda^{(n)}\,|\, \lambda \vdash
k\right\}$  form a basis of $H_{n,k}^S$. In particular,  if $n\geq k$ then the dimension of
$H_{n,k}^S$ is equal to $\pi(k)$, the number of partitions of $k$. Thus dimension of $H_{n,k}^S$ is constant for fixed $k$ and all sufficiently large $n$.

Using representation theory of the symmetric group, and in particular so-called higher Specht polynomials, we are able to give a uniform representation of the cone of symmetric sums of squares of fixed degree $2d$ in terms of matrix polynomials, with coefficients that are rational functions in $n$  (see Theorem \ref{thm:GP3}) and similarly a uniform representation of the sequence of dual cones in terms of linear matrix polynomials whose coefficients ``symmetrizations'' of sums of squares in $2d$ variables.  This gives us in particular a better understanding of the faces of $\Sigma^S_{n,2d}$ that are not faces of $\mathcal{P}^S_{n,2d}$.  We make these findings more concrete in the case of quartic symmetric  forms, where we completely characterize  the cone $\Sigma_{n,4}$ and its boundary. This in particular allows us to easily compute a family of symmetric sums of squares polynomials that are on the boundary of $\Sigma_{n,4}^S$ without having a real zero, thus certifying the difference of symmetric sums of squares and symmetric non-negative forms (see Theorem \ref{thm:1888}).

\subsection{Asymptotic behavior of sums of squares and nonnegative forms} 
Our characterization allows us to study   the asymptotic relationship between symmetric sums of squares and symmetric nonnegative forms of fixed degree in a growing number of variables. Even though vector spaces $H^{S}_{n,2d}$ have the same dimension $\pi(2d)$ for all $n\geq 2d$, there is no canonical way to identify vector spaces $H^{S}_{n,2d}$ for different $n$. In fact there are several natural ways to define transition maps identifying vector spaces of symmetric forms in different numbers of variables (see for example \cite{Transition}), and different transition maps will lead to different limits as $n$ goes to infinity. The system of vector spaces $H^S_{n,2d}$ together with transition maps will define a \textit{directed system} of vector spaces, and we can define the \textit{direct limit} $H^S_{\infty,2d}$ of vector space $H_{n,2d}^S$ \cite[Section 7.6]{advanced}.

\indent One way of defining  these transitions is by symmetrization:
\begin{definition}
For $f\in\R[X]$ we define the symmetrization of $f$ as $$\Sym_n(f):=\frac{1}{n!}\sum_{\sigma\in \mathcal{S}_n}\sigma(f).$$ 
 \end{definition}
The  composition of the natural inclusion $i_{n,n+1}: H_{n,2d} \rightarrow H_{n+1,2d}$ with $\Sym_{n+1}$ defines injective maps $\varphi_{n,n+1}:H_{n,2d}^S\rightarrow H_{n+1,2d}^S$. 

Therefore, we have the following.
 \begin{prop}
For $n,m\in\N$ with $n>m$  consider the maps $\varphi_{m,n}: H_{m,2d}^S\rightarrow H_{n,2d}^S$ defined  by
$$\varphi_{m,n}(p)=\Sym_n(p).$$
Then, the system of vector spaces  $H^S_{n,2d}$ together with the maps $\varphi_{m,n}$ defines a directed system and for $m\geq  2d$ the maps $\varphi_{m,n}$ are isomorphisms. \end{prop}

We consider the direct limit $H_{\infty,k}^{\varphi}$ of the directed system above. Since the maps $\varphi_{m,n}$ are isomorphisms with $m \geq 2d$, it follows that  $H_{\infty,k}^{\varphi}$ is also a real vector space of dimension $\pi(2d)$. Therefore we have natural isomorphisms $\varphi_n: H_{\infty,2d}^\varphi\rightarrow H_{n,2d}^S$ for $n \geq 2d$, which allow us to view the cones $\Sigma_{n,2d}^S$ and $\mathcal{P}_{n,2d}^S$ as subsets of $H_{\infty,2d}^\varphi$. Note that we have  $\varphi_{m,n}(\Sigma_{m,2d}^S)\subseteq \Sigma_{n,2d}^S$ and $\varphi_{m,n}(\mathcal{P}^S_{m,2d})\subseteq \mathcal{P}^S_{n,2d}$. It follows that with transition maps $\varphi_{m,n}$ the cones of sums of squares and the cones of nonnegative polynomials form nested increasing sequences in $H_{\infty,2d}^\varphi$. We define the following cones of nonnegative elements and sums of squares in $H_{\infty,k}^{\varphi}$:

$$\mathcal{P}_{\infty,2d}^{\varphi}:=\left\{\ff\in H_{\infty,2d}^{\varphi}\,:\,
\varphi_n(\ff)\in\mathcal{P}^S_{n,2d}~\text{ for all }~ n\geq 2d \right\},$$

$$\Sigma_{\infty,2d}^\varphi:=\left\{\ff\in H_{\infty,2d}^{\varphi}\,:\,
\varphi_n(\ff) \in \Sigma^S_{n,2d}~\text{
for all }~ n\geq 2d \right\}.$$
 
The following Theorem is immediate from the above discussion.
\begin{thm}\label{THM FD}
The cones $\mathcal{P}_{\infty, 2d}^{\varphi}$ and $\Sigma_{\infty,2d}^{\varphi}$ are full-dimensional convex cones in $H_{\infty,2d}^\varphi\simeq\R^{\pi(2d)}$.
\end{thm} 
Forms in fixed degree make up a vanishingly small portion of nonnegative forms as the number of variables grows \cite{B1}.  More precisely (non-symmetric), nonnegative forms and sums of squares in $n$ variables of degree $2d$ with average $1$ on the unit sphere form compact convex sets $\bar{P}_{n,2d}$ and $\bar{\Sigma}_{n,2d}$ of dimension $D=\binom{n+d-1}{d}-1$. It was shown in \cite{B1} that the ratio of volumes
$$\left(\frac{\vol \bar{\Sigma}_{n,2d}}{\vol \bar{P}_{n,2d}}\right)^{\frac{1}{D}}$$
converges to $0$ for all $2d\geq4$ as $n$ goes to infinity. The ratio of volumes is raised to the power $1/D$ to take into account the effects of large dimension on volumes as the volume of $(1+\varepsilon)\Sigma_{n,2d}$ is equal to $(1+\varepsilon)^D\vol \Sigma_{n,2d}$.

By contrast, the cones of symmetric nonnegative forms and sums of squares of fixed degree live in the vector space $H^S_{n,2d}$ which has fixed dimension $\pi(2d)$ for a sufficiently large number of variables $n$. Therefore, to prove that asymptotically symmetric sums of squares make up a nontrivial portion of symmetric nonnegative forms (with respect to some transition maps) it suffices to show that both limits are full-dimensional in $H_{\infty,2d}^\varphi\simeq\R^{\pi(2d)}$, which is done in Theorem \ref{THM FD}.

Besides the direct limit we also study \textit{symmetric power mean inequalities}. We can express a symmetric form $f$ in $H^S_{n,2d}$ in the power mean basis $p_{\lambda}^{(n)}$ with $\lambda \vdash 2d$:
$$f=\sum_{\lambda \vdash 2d} c_{\lambda}p_{\lambda}^{(n)}.$$
Using the power mean basis we can define transition maps $\rho_{m,n}$ by identifying
$$\sum_{\lambda \vdash 2d} c_{\lambda}p_{\lambda}^{(m)} \,\,\,\,\, \text{with}\,\,\,\,\, \sum_{\lambda \vdash 2d} c_{\lambda}p_{\lambda}^{(n)}.$$
As before the system of vector spaces  $H^S_{n,2d}$ together with the maps $\rho_{m,n}$ defines a directed system, and for $m\geq  2d$ the maps $\rho_{m,n}$ are isomorphisms. We consider the direct limit $H_{\infty,k}^{\rho}$. Since the maps $\rho_{m,n}$ are isomorphisms with $m \geq 2d$, it follows that  $H_{\infty,k}^{\rho}$ is again a real vector space of dimension $\pi(2d)$. The natural isomorphisms $\rho_n: H_{\infty,2d}^\rho\rightarrow H_{n,2d}^S$ for $n \geq 2d$, allow us to view the cones $\Sigma_{n,2d}^S$ and $\mathcal{P}_{n,2d}^S$ as subsets of $H_{\infty,2d}^\rho$. We will denote these images by $\Sigma_{n,2d}^\rho$ and $\mathcal{P}_{n,2d}^\rho$ and consider the limit cones:

\begin{definition}\label{def:rey}

$$\mathfrak{P}_{2d}:=\left\{\mathfrak{f} \in H^\rho_{\infty,2d}\,:\,
\rho_n(\mathfrak{f})\in\mathcal{P}^S_{n,2d}~\text{ for all }~ n\geq 2d \right\}$$
\noindent and
$$\mathfrak{S}_{2d}:=\left\{\mathfrak{f} \in H^\rho_{\infty,2d}\,:\,
\rho_n(\mathfrak{f})\in\Sigma^S_{n,2d}~\text{ for all }~ n\geq 2d \right\}.$$
\end{definition}

The  sequences $\mathcal{P}^{\rho}_{n,2d}$ and $\Sigma_{n,2d}^{\rho}$ are not nested in general. Let $x=(X_1,\dots,X_n)$ be a point in $\R^n$ and let $\tilde{x}$ be
the point in $\R^{k \cdot n}$ with each $X_i$ repeated $k$
times. Then, 
 $$p_i^{(k\cdot n)}(\tilde{x})=\frac{1}{k\cdot n}(k X_1^i+\ldots
+k X_n^i)=p_i^{(n)}(x).$$ It follows that $f^{(k\cdot 
n)}\in\mathcal{P}^{p}_{k\cdot  n,d}\Rightarrow
f^{(n)}\in\mathcal{P}^{p}_{n,d}$ and hence we get the following.
\begin{prop}\label{prop:inclusion}
Consider the cones $\mathcal{P}^{p}_{n,2d}$ as convex subsets of
$\R^{\pi(d)}$ using the coefficients $c_{\lambda}$ of $p_{\lambda}$.
Then for every $n\geq 2d$ and $k\in\N$ we have
$$\mathcal{P}^{\rho}_{k
\cdot n,2d}\subseteq\mathcal{P}^{\rho}_{n,2d}\subset H^{\rho}_{\infty,2d}\simeq\R^{\pi(2d)}.$$ \end{prop}
\begin{remark}\label{REM SOS}
We note that the same proof also yields that
$\Sigma^{\rho}_{k \cdot n,2d}\subseteq\Sigma^{\rho}_{n,2d}$.
\end{remark}
It is not directly clear from  Proposition \ref{prop:inclusion} that the sequences $\mathcal{P}^{\rho}_{n,2d}$ and $\Sigma^{\rho}_{n,2d}$ have limits, which we show separately:
\begin{thm}\label{THM LIMIT}
\begin{enumerate}
\item[$(a)$] The cones $\mathfrak{S}_{2d}$ and $\mathfrak{P}_{2d}$ are full dimensional cones.
\item[$(b)$] \[\mathfrak{P}_{2d}=\lim_{n\rightarrow \infty} \mathcal{P}^{\rho}_{n,2d} \quad \text{and} \quad \mathfrak{S}_{2d}=\lim_{n\rightarrow \infty} \Sigma^{\rho}_{n,2d}.\]
\end{enumerate}
\end{thm}

Although the cone of symmetric nonnegative quartics is  strictly bigger than the cone of symmetric quartic sums of squares for any number of variables $n\geq 4$, we show that in the limit the two cones coincide:

\begin{thm}\label{THM DEG4}
\[\mathfrak{P}_{4}=\mathfrak{S}_{4}.\]
\end{thm}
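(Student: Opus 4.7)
The containment $\mathfrak{S}_{4}\subseteq\mathfrak{P}_{4}$ is immediate since sums of squares are nonnegative. The substance is the reverse inclusion, and my plan is to compare the explicit descriptions of the two cones obtained earlier in the paper.

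On the positivity side, Theorem~\ref{thm:charpos} (the half-degree principle) turns membership in $\mathfrak{P}_{4}$ into a concrete inequality: evaluating a symmetric form $f=\sum_{\lambda\vdash 4}c_{\lambda}p_{\lambda}$ on a two-value point with a fraction $t$ of coordinates equal to $a$ and the remaining $1-t$ equal to $b$, each normalized power sum becomes $p_{i}=ta^{i}+(1-t)b^{i}$, so in the limit $f\in\mathfrak{P}_{4}$ is equivalent to the three-variable polynomial
$$F_{f}(a,b,t)\;=\;\sum_{\lambda\vdash 4}c_{\lambda}\prod_{j}\bigl(t\,a^{\lambda_{j}}+(1-t)\,b^{\lambda_{j}}\bigr)$$
being nonnegative for every $(a,b)\in\R^{2}$ and every $t\in[0,1]$. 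On the SOS side, Theorem~\ref{thm:decom} combined with the isotypic decomposition of $H_{n,2}$ under $\mathcal{S}_{n}$ will express $\mathfrak{S}_{4}$ as the feasibility region of a fixed (i.e., $n$-independent in the limit) semidefinite condition on the coordinate vector $(c_{\lambda})\in\R^{5}$.

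With both cones cut out by explicit conditions in $\R^{\pi(4)}=\R^{5}$, the strategy is to compare them directly. The key observation is that for each fixed $t\in[0,1]$, $F_{f}(\cdot,\cdot,t)$ is a nonnegative binary quartic and hence, by Hilbert's classical theorem, a sum of squares; its canonical Gram matrix in the monomial basis $(a^{2},ab,b^{2})$ is positive semidefinite and depends polynomially on $t$. I would then show, using the isotypic block description coming from Theorem~\ref{thm:decom}, that the ``moment matrices'' obtained either by integrating these Gram matrices against appropriate polynomials in $t$, or by evaluating at finitely many values of $t$ that separate the relevant $\mathcal{S}_{n}$-isotypic components, are exactly the PSD data required for a symmetric SOS certificate of $f$.

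The main obstacle is this globalization step: a binary quartic admits many SOS decompositions, so one must choose a family that varies coherently in $t$ and respects the symmetric structure. The natural candidate is the canonical Gram matrix in the monomial basis just mentioned, which is uniquely determined by $F_{f}(\cdot,\cdot,t)$ and hence depends polynomially on $t$; one then has to verify that feeding this $t$-family into the decomposition recipe of Theorem~\ref{thm:decom} produces a genuine symmetric SOS of $f$. This is where Hilbert's coincidence of nonnegativity and SOS for binary forms enters essentially, and also the place where the argument is expected to break down for $2d\geq 6$, explaining why the analogous statement must be left as a conjecture in that range.
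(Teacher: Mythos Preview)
Your proposal outlines a direct construction---given $\mathfrak{f}\in\mathfrak{P}_4$, build the two $2\times 2$ positive semidefinite matrices required by Corollary~\ref{cor:SOS4}---but the construction is not actually carried out, and the one concrete step you do claim is incorrect. The Gram matrix of a binary quartic in the basis $(a^2,ab,b^2)$ is \emph{not} uniquely determined: the $(1,3)$ and $(2,2)$ entries are only constrained by a single linear relation, so there is a one-parameter family of Gram matrices, and for a nonnegative quartic the PSD ones form an interval whose endpoints need not depend polynomially (or even continuously, at points where the quartic has a real double zero) on $t$. Thus there is no ``canonical'' choice with the properties you assert. More seriously, even granting a nice $t$-family of $3\times 3$ PSD Gram matrices, you give no mechanism linking them to the specific block structure of $\mathfrak{S}_4$, namely a pair of $2\times 2$ PSD matrices in the $(p_1^2,p_2)$ and $((x_1-x_2)p_1,x_1^2-x_2^2)$ bases; ``integrate over $t$'' or ``sample at finitely many $t$'' does not by itself produce that structure, and you acknowledge this globalization as the main obstacle without resolving it.

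The paper takes a completely different route: rather than constructing certificates, it argues by duality that $\partial\mathfrak{S}_4\subseteq\partial\mathfrak{P}_4$. Using the analysis of extreme rays of $(\Sigma_{n,4}^S)^*$ in Lemma~\ref{le:Kern} and Theorem~\ref{thm:boundary}, every strictly positive form on $\partial\mathfrak{S}_4$ is parametrized explicitly by four real numbers $a,b,c,d$ (plus one exceptional family). For each such form the discriminant $\delta_{\mathfrak{f}}(\alpha)$ of the associated bivariate quartic $\Phi^\alpha_{\mathfrak f}$ is computed and factored; one shows by hand that a factor $Q_2(\alpha)$ always has a root in $(0,1)$ and that the corresponding double root of $\Phi^\alpha_{\mathfrak f}$ is real, whence $\mathfrak{f}\in\partial\mathfrak{P}_4$ by Lemma~\ref{le:rand}. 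Your half-degree reduction is the same starting point, but the decisive content---either a genuine globalization argument or the boundary/discriminant analysis---is missing from your proposal.
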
 
In particular, this result applies in the situation of power mean inequalities studied in \cite{Rez1}, and hence it is possible to verify any such inequality using sums of squares.
We conjecture that this happens in arbitrary degree $2d$, i.e., we suggest the following.
 \begin{conj}\label{CONJ MAIN}
\[\mathfrak{P}_{2d}=\mathfrak{S}_{2d} \quad \text{for all} \quad d \in \mathbb{N}.\]
\end{conj}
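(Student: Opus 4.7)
The plan is to attack the conjecture by reformulating it dually as a symmetric truncated moment problem and then matching the resulting positive semidefinite conditions against the half-degree principle underlying Theorem \ref{thm:charpos}. Write $\mathfrak{P}_{2d}^{*}$ and $\mathfrak{S}_{2d}^{*}$ for the dual cones in $\R^{\pi(2d)}$; the conjecture is equivalent to $\mathfrak{S}_{2d}^{*}\subseteq\mathfrak{P}_{2d}^{*}$. In the power-sum basis $\{p_\lambda^{(n)}\}$, a linear functional on the limit space is a tuple $(m_\lambda)_{\lambda\vdash 2d}$. Evaluating at $x\in\R^n$ whose coordinates have empirical measure $\mu$ yields $p_\lambda^{(n)}(x)=\prod_i \mu_{\lambda_i}$, where $\mu_k$ is the $k$-th moment of $\mu$. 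By Theorem \ref{thm:charpos}, $\mathfrak{P}_{2d}^{*}$ is then the closed conic hull of the \emph{multiplicative} moment functionals coming from positive measures on $\R$ supported on at most $d$ points. The conjecture becomes: every limit symmetric PSD pseudo-moment functional of degree $2d$ is the multiplicative moment functional of an actual $d$-atomic positive measure on $\R$.

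The first step is to give a tractable parametrization of $\mathfrak{S}_{2d}^{*}$ via the isotypic decomposition of $H_{n,d}^S$ under the $\mathcal{S}_n$-action. A symmetric sum of squares splits into PSD quadratic forms on each isotypic block indexed by partitions of $d$ (and lower). As $n\to\infty$ the multiplicities stabilize by representation stability for Specht modules, so that $\mathfrak{S}_{2d}$ admits a block-diagonal description as the image of an intersection of finitely many limiting PSD cones projected onto $\R^{\pi(2d)}$. The degree-four argument for Theorem \ref{THM DEG4} fits this scheme in the simplest possible way, since only the trivial and standard representations of $\mathcal{S}_n$ contribute nontrivially in the top degree, and the two PSD conditions can be matched against the $4$-variate nonnegativity condition coming from the half-degree principle. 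For general $2d$ the number of blocks grows with $d$, producing a genuine multi-block semidefinite description of $\mathfrak{S}_{2d}$ whose dual must be compared to the moment-problem description of $\mathfrak{P}_{2d}^{*}$.

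The second step, and the main obstacle, is to prove the matching between these limiting PSD conditions and the multiplicative-moment description of $\mathfrak{P}_{2d}^{*}$ in arbitrary degree. Concretely, one must show that any symmetric pseudo-moment functional of degree $2d$ that is a stable limit of PSD functionals on $H_{n,2d}^S$ is the moment functional of a positive measure on $\R$ supported on at most $d$ points. This is a flat-extension-type statement in the univariate truncated moment problem, but presented in an equivariant symmetric form with PSD conditions coming from higher isotypic blocks rather than from a classical Hankel matrix. The natural approach is to show by a representation-theoretic argument that the higher-block PSD conditions in the limit \emph{reduce} to Hankel positivity of the moment sequence $(\mu_k)_{k\le 2d}$, after which a Curto--Fialkow-type flat-extension theorem produces the required $d$-atomic representing measure.

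I expect the reduction in the second step to be where genuinely new ideas are required, since it is precisely the content of the conjecture beyond degree four: the stable equivariant PSD conditions must collapse to univariate Hankel positivity of the empirical moments, and no construction of the necessary intertwining is currently known in degree $2d\ge 6$. A viable route would be an induction on $d$ that peels off one atom at a time by combining Theorem \ref{thm:charpos} with an explicit Hurwitz-style SOS identity valid uniformly in $n$; the degree-four case (Theorem \ref{THM DEG4}) would serve as the base. An alternative, more abstract route would be to establish a symmetric Positivstellensatz in the stable-representation limit, bypassing explicit SOS certificates altogether, but this too appears open.
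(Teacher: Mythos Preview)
The statement you were given is Conjecture~\ref{CONJ MAIN}, not a theorem: the paper does \emph{not} prove it. The only case established in the paper is $2d=4$ (Theorem~\ref{THM DEG4}), and the general equality $\mathfrak{P}_{2d}=\mathfrak{S}_{2d}$ is explicitly left open. So there is no ``paper's own proof'' to compare your proposal against.

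Your write-up is accordingly not a proof but a strategic outline, and you are candid about this: you say that the reduction in your second step ``is precisely the content of the conjecture beyond degree four,'' that ``no construction of the necessary intertwining is currently known in degree $2d\ge 6$,'' and that the alternative Positivstellensatz route ``too appears open.'' That is an accurate assessment. The dual reformulation you propose (matching limiting block-PSD conditions against the $d$-atomic moment description coming from Theorem~\ref{thm:charpos}) is a sensible framework and is compatible with how the paper handles $2d=4$ via Theorem~\ref{thm:boundary} and Lemma~\ref{le:Kern}, but nothing in your outline closes the gap for $2d\ge 6$. In particular, the claim that the higher isotypic PSD conditions ``reduce'' in the limit to univariate Hankel positivity is exactly the unproved core of the conjecture, and neither the induction-on-$d$ idea nor the abstract Positivstellensatz idea is carried out.

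In short: there is no proof in the paper, and your proposal does not supply one either; it is a reasonable research plan with the decisive step still missing.
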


\subsection{Structure of the article and guide for the reader}
\paragraph{This article is structured as follows:} 
We provide a characterization of symmetric non-negative forms and the limit cone in Section \ref{sec:psd}.  Section \ref{SEC Symsos} provides a detailed study of symmetric sums of squares.  To this end we present the general framework of how to use representation theory to study invariant sums of squares in Subsection \ref{subsec:SOS}. In Subsection \ref{sub-sym} we outline the basic notions of the representation theory of the symmetric group. These results  are then used in  Subsection 
\ref{sub:higherspecht} to represent the cone of symmetric sums of squares  (without restrictions on the degree) in terms of matrix polynomials in Theorems \ref{thm:GP2} and \ref{thm:GP2'}. The subsequent  Subsection \ref{sub:fixeddegree} then discusses how restricting degree  allows for a uniform description of the cones $\Sigma_{n,2d}$ in terms of the power mean bases $p_\lambda^{(n)}$ (Theorem \ref{thm:GP3}). The final subsection of Section  \ref{SEC Symsos} discusses some results on the dual cone with are needed in the sequel. The subsequent Section \ref{se:quartic} makes these results more concrete as we give a description of the cone of symmetric quartic sums of squares (Theorem \ref{thm:decom}). Furthermore, we describe the elements of the boundary of $\Sigma_{n,4}$ which are strictly positive in Theorem \ref{thm:boundary} and give an explicit example of such a polynomial for every $n\geq 4$ in Example \ref{ex:nice}. From this example it follows in particular that besides the cases where Hilbert showed the equality of sums of squares and non-negative forms there always exist symmetric positive definite forms which are not sums of squares (see Theorem \ref{thm:1888}).    In Section \ref{seq:limit} we explore the two notions of limits and prove Theorem \ref{THM LIMIT}. We also discuss the connection with the power mean inequalities. These power mean inequalities are then again studied in more detail In the final Section \ref{se:main},  where we show in particular that all valid power mean inequalities of degree 4 are sums of squares (Theorem  \ref{THM DEG4} ). 

The order of sections was chosen to present the more general statements  in Sections \ref{sec:psd}, \ref{SEC Symsos}, and \ref{seq:limit}  and then apply them in the quartic case in Sections \ref{se:quartic} and  \ref{se:main}.
Depending on reader's preferences they can also  begin  by reading Section \ref{se:quartic} first before actually diving into Section \ref{SEC Symsos} and similarly Section  \ref{se:main} before Section \ref{seq:limit}, 
while taking the necessary results from previous sections for granted. 

\section{Symmetric PSD forms}\label{sec:psd}

We begin by characterising the cone  $\mathfrak{P}_{2d}$. One key result needed to describe the non-negative 
 symmetric forms is the so-called  half degree principle (see
\cite{timofte-2003,Rie,Rie2}): For a natural number  $k\in \N$  we define
$A_k$ to be the set of all points in $\R^n$ with at most $k$ distinct
components, i.e.,
$$A_k:=\{x\in\R^n\,:\,|\{X_1,\ldots,X_n\}|\leq k\}.$$ The half degree
principle says that a symmetric form of degree $2d> 2$ is non-negative, if
and only if it is non-negative on $A_d$:
\begin{prop}[Half degree principle]\label{prop:degree}
Let $f\in H^S_{n,2d}$ and set $k:=\max\{2,d\}$. Then $f$ is
non-negative if and only if
$$f(y)\geq 0 \text{ for all } y\in A_{k}.$$
\end{prop}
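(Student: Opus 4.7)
The ``only if'' direction is immediate since $A_k \subseteq \R^n$. For ``if'', I would argue the contrapositive: assuming $f \geq 0$ on $A_k$, show $f \geq 0$ throughout $\R^n$. If $n \leq k$ then $\R^n = A_k$ and there is nothing to prove, so assume $n \geq 2d$. By homogeneity, it suffices to check nonnegativity on the compact unit sphere $S = \{x \in \R^n : \|x\|^2 = 1\}$, so $f$ attains its minimum on $S$ at some point $x^* \in S$, and the task reduces to showing that this minimizer can be chosen in $A_k$.

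Write $f = g(p_1, \ldots, p_{2d})$, apply Lagrange multipliers at $x^*$, and expand $\nabla f(x^*) = 2\lambda x^*$ coordinate by coordinate:
$$\sum_{i=1}^{2d} \frac{i}{n}\, \frac{\partial g}{\partial p_i}(x^*)\,(x_j^*)^{i-1} \;=\; 2\lambda\, x_j^* \qquad (j = 1, \ldots, n).$$
Thus every coordinate $x_j^*$ is a root of a single univariate polynomial $Q(t)$ of degree at most $2d-1$, giving immediately the coarser conclusion $x^* \in A_{2d-1}$.

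The technical core of the half-degree principle is sharpening the bound from $2d-1$ down to $k = \max\{2, d\}$. My plan is to restrict $f$ to the ``block subspace'' $L \subseteq \R^n$ on which coordinates in the same equivalence class of $x^*$ are tied together: if $x^*$ has $s$ distinct values $v_1, \ldots, v_s$ with multiplicities $n_1, \ldots, n_s$, then $\dim L = s$, and on $L$ the higher power sums $\tilde p_{s+1}, \tilde p_{s+2}, \ldots$ become polynomial functions of $\tilde p_1, \ldots, \tilde p_s$ via Newton's identities. Re-running the Lagrange analysis for the restricted polynomial $\tilde f = f|_L = F(\tilde p_1, \ldots, \tilde p_s)$ on the ellipsoid $\{\sum_k n_k v_k^2 = 1\}$ produces a polynomial identity $R(t) = \sum_{i=1}^s \tfrac{i}{n}\partial_{\tilde p_i}F(v^*)\,t^{i-1} - 2\mu t$ of degree $\leq s-1$ having the $s$ distinct values $v_k$ as roots, forcing $R \equiv 0$ and yielding vanishing conditions $\partial F/\partial \tilde p_i(v^*) = 0$ for $i \neq 2$. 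Combining these vanishing conditions with the weighted-degree-$2d$ bound inherited from $f$ (which controls how many of the $F_i$ can be genuinely independent variables) is what ultimately squeezes $s$ down to $k$; the degenerate case $d = 1$ is handled separately by diagonalization of the associated symmetric matrix.

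The genuine obstacle is precisely this final sharpening: the first-order Lagrange calculation alone only gives $2d - 1$, and the combinatorial work of converting the weighted-degree-$2d$ bound together with the derivative-vanishing relations into the tight bound $\max\{2, d\}$ is the heart of the arguments of \cite{timofte-2003, Rie}. For a full proof I would ultimately invoke those references rather than reproduce the bookkeeping.
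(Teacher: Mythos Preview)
The paper does not prove this proposition; it is quoted as a known result from \cite{timofte-2003,Rie} and used as a black box. Your proposal already does more than the paper by sketching the Lagrange-multiplier mechanism and correctly identifying that the coarse bound $x^*\in A_{2d-1}$ is immediate while the sharpening to $\max\{2,d\}$ is the real content of the cited works. Since you ultimately invoke those same references for that step, your treatment is consistent with---indeed more informative than---the paper's.

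Two remarks on your sharpening sketch, should you wish to refine it. First, the weighted power sums $\tilde p_{s+1},\tilde p_{s+2},\ldots$ on the block subspace $L$ are not in general \emph{polynomial} functions of $\tilde p_1,\ldots,\tilde p_s$ when the block sizes $n_k$ differ (Newton's identities do not carry over to weighted power sums); they are, however, locally smooth by the implicit function theorem, and that is already enough to run your chain-rule step and conclude $R\equiv 0$. Second, and more substantively, the pointwise conditions $\partial_{\tilde p_i}F(v^*)=0$ for $i\neq 2$ do not by themselves bound $s$: they hold at \emph{every} constrained minimizer regardless of how many distinct coordinate values it has, so no contradiction with $s>d$ follows from them directly. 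The arguments in \cite{timofte-2003,Rie} obtain the half-degree bound by a different mechanism, so your decision to cite rather than reproduce that step matches the paper's treatment exactly.
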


\begin{remark}
By considering $f-\epsilon(X_1^2+\dots+X_n^2)^d$ for a sufficiently small $\epsilon >0$ we see that we can also replace \emph{non-negative} by \emph{positive} in the above
Theorem, thus characterizing strict positivity of symmetric forms. 
\end{remark}
A non increasing sequence   of $k$ natural numbers
$\vartheta:=(\vartheta_1,\ldots, \vartheta_k)$ such that
$\vartheta_1+\ldots+ \vartheta_k=n$ is called a $k$-partition of $n$
(written $\vartheta\vdash_k n$).  Given a symmetric form $f\in
H^S_{n,2d}$ and $\vartheta$ a $k$-partition of $n$ we define
$f^{\vartheta}\in\R[t_1,\ldots, t_k]$ via
$$f^{\vartheta}(t_1,\ldots,t_k):=f(\underbrace{t_1,\ldots,t_1}_{\vartheta_1},\underbrace{t_2,\ldots,t_2}_{\vartheta_2},\ldots,\underbrace{t_{k},\ldots,t_{k}}_{\vartheta_{k}}).$$

From now on assume that $2d>2$. Then the half-degree principle
implies that nonnegativity of $f=\sum_{\lambda\vdash
2d}c_{\lambda}p_{\lambda}$ is equivalent to nonnegativity of
$$f^{\vartheta}:=\sum\limits_{\lambda\vdash
2d}c_{\lambda}p_{\lambda}^{\vartheta}(t_1,\ldots t_k)$$ for all
$\vartheta\vdash_d n$, since the polynomials $f^{\vartheta}$ give the values of $f$ on all points with at most $d$ parts. We note that for all $i\in\N$ we have
$$p_i^{\vartheta}=\frac{1}{n}(\vartheta_1t_1^i+\vartheta_2t_2^i+\ldots+\vartheta_dt_d^i).$$
For a partition $\lambda=(\lambda_1,\dots,\lambda_l) \vdash 2d$ we
define a $2d$-variate form $\Phi_{\lambda}$ in the variables
$s_1,\dots,s_d$ and $t_1,\dots, t_d$ by
$$ \Phi_{\lambda}(s_1,\dots,s_d,t_1,\dots,t_d)=\prod_{i=1}^l(s_1t_1^{\lambda_i}+s_2t_2^{\lambda_i}+\ldots+s_dt_d^{\lambda_i})$$
\noindent and use it to associate to any form $f \in H^S_{n,2d}$,
$f=\sum_{\lambda\vdash 2d}c_{\lambda}p_{\lambda}$  the form
\begin{align*}\label{test}\Phi_f:=\sum_{\lambda\vdash
2d}c_{\lambda}\Phi_{{\lambda}}.
\end{align*}
\noindent Note that
$$\Phi_{\lambda}\left(\frac{\vartheta_1}{n},\dots\frac{\vartheta_d}{n},t_1,\dots
t_d\right)=p_{\lambda}^{\vartheta}(t_1,\ldots,t_d).$$ We define the
set  $$W_{n}=\left\{w=(w_1,\ldots w_d)\in \R^d \, | \, n\cdot w_i \in
\mathbb{N}\cup\{0\}, \text{ and } w_1+\ldots+w_d=1\right\}.$$

\noindent It follows from the arguments above that $f \in H^S_{n,2d}$
is non-negative if and only if the forms $\Phi_{f}(s,t)$ are
non-negative forms in $t$ for all $w \in W_n$. This is summarized in
the following corollary.
\begin{cor}\label{cor:degree}
Let $f=\sum_{\lambda\vdash 2d }c_{\lambda}p_{\lambda}$ be a form in
$H^S_{n,2d}$. Then $f$ is non-negative (positive) if and only if for
all $w\in W_n$
the $d$-variate forms $\Phi_f(w,t)$ are non-negative (positive).
\end{cor}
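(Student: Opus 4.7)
The plan is to show that Corollary \ref{cor:degree} is essentially a repackaging of the half-degree principle in the coordinates provided by the polynomials $\Phi_{\lambda}$. Since the assumption $2d>2$ is in force and $d \geq 2$ (for $d=1$ the corollary is about quadratic forms, which do not require this machinery), the half-degree principle (Proposition~\ref{prop:degree}) tells us that nonnegativity of $f \in H^S_{n,2d}$ is equivalent to nonnegativity of $f$ on the set $A_d$ of points in $\R^n$ with at most $d$ distinct coordinate values.

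The first step is to parametrize $A_d$ conveniently. Any $y \in A_d$ can be written, up to reordering of coordinates, as $(t_1,\dots,t_1,t_2,\dots,t_2,\dots,t_d,\dots,t_d)$ where the $i$-th value is repeated $\vartheta_i$ times and $\vartheta=(\vartheta_1,\dots,\vartheta_d)$ is a $d$-partition of $n$ (allowing some $\vartheta_i=0$ absorbs the case of strictly fewer distinct coordinates). Since $f$ is symmetric, its value on $y$ depends only on the pair $(\vartheta,t)$, and it equals $f^{\vartheta}(t_1,\dots,t_d)$. Hence nonnegativity of $f$ on $A_d$ is equivalent to nonnegativity of $f^{\vartheta}(t)$ as a polynomial in $t \in \R^d$ for every $\vartheta \vdash_d n$.

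The second step is to recognize $f^{\vartheta}$ as a specialization of $\Phi_f$. From the displayed identity
$$p_i^{\vartheta}(t)=\tfrac{1}{n}\bigl(\vartheta_1 t_1^i+\dots+\vartheta_d t_d^i\bigr)$$
one sees by directly multiplying out the definition of $p_\lambda$ that $p_{\lambda}^{\vartheta}(t)=\Phi_{\lambda}(\vartheta_1/n,\dots,\vartheta_d/n,t_1,\dots,t_d)$, exactly as the paper records. Summing over $\lambda$ with coefficients $c_\lambda$ gives $f^{\vartheta}(t) = \Phi_f(w,t)$ where $w_i=\vartheta_i/n$. The map $\vartheta \mapsto w$ is a bijection between $d$-partitions of $n$ (with nonnegative parts) and the set $W_n$, so running over all $\vartheta \vdash_d n$ is the same as running over all $w \in W_n$. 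This establishes the nonnegativity statement.

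Finally, the positivity statement follows by the remark after Proposition~\ref{prop:degree}: strict positivity of $f$ on $\R^n \setminus \{0\}$ is obtained by applying the nonnegativity half-degree principle to $f-\varepsilon(x_1^2+\dots+x_n^2)^d$ for small $\varepsilon>0$ and translating through the same bijection. Since every step is either a direct quotation of Proposition~\ref{prop:degree}, the symmetric reordering of $A_d$, or the already verified identity $p_{\lambda}^{\vartheta}(t)=\Phi_{\lambda}(\vartheta/n,t)$, there is no real obstacle; the only point requiring mild care is making sure the parametrization of $A_d$ allows $\vartheta_i=0$ so that points with fewer than $d$ distinct coordinates are also covered, which is exactly why $W_n$ is defined to include boundary weights.
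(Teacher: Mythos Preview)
Your proof is correct and follows essentially the same approach as the paper: the corollary is stated as a summary of the discussion immediately preceding it, and you have faithfully reconstructed that discussion (half-degree principle, parametrization of $A_d$ by $d$-partitions, the identity $p_{\lambda}^{\vartheta}(t)=\Phi_{\lambda}(\vartheta/n,t)$, and the bijection with $W_n$). Your explicit remark about allowing $\vartheta_i=0$ so that $W_n$ captures points with fewer than $d$ distinct coordinates is a useful clarification that the paper leaves implicit.
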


This result  enables us to characterize the elements of
$\mathfrak{P}_{2d}$. We expand the sets $W_n$ to the standard simplex
$\Delta$ in $\R^d$:
$$\Delta:=\left\{\alpha=(\alpha_1,\ldots,\alpha_d)\in[0,1]^d\,:\,
\alpha_1+\ldots+\alpha_d=1\right\}.$$

\noindent Then we have the following Theorem characterizing $\mathfrak{P}_{2d}$.

\begin{thm}\label{thm:charpos}
Let $\mathfrak{f}\in H^{\rho}_{\infty,2d}$ be the sequence defined by
$f^{(n)}=\sum_{\lambda\vdash 2d}c_\lambda p_\lambda^{(n)}$. Then
$\mathfrak{f}\in\mathfrak{P}_{2d}$ if and only if the $2d$-variate
polynomial
$\Phi_{f}(s,t)$ is non-negative on $\Delta\times \R^d$.
\end{thm}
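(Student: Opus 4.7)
The plan is to deduce the theorem directly from Corollary \ref{cor:degree} together with a density argument on the simplex $\Delta$.

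First, I would unpack the definition of $\mathfrak{P}_{2d}$: a sequence $\mathfrak{f}$ with coefficients $(c_{\lambda})_{\lambda \vdash 2d}$ lies in $\mathfrak{P}_{2d}$ iff for every $n \geq 2d$ the form $f^{(n)}=\sum c_{\lambda}p_{\lambda}^{(n)}$ is nonnegative. By Corollary \ref{cor:degree}, for each fixed $n$ this is in turn equivalent to $\Phi_{f}(w,t) \geq 0$ for every $w \in W_{n}$ and every $t \in \R^{d}$. Thus
\[
\mathfrak{f}\in\mathfrak{P}_{2d} \iff \Phi_{f}(w,t)\geq 0 \text{ for all } t \in \R^{d} \text{ and all } w\in \bigcup_{n\geq 2d} W_{n}.
\]

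The backward direction of the theorem is then immediate: if $\Phi_{f}$ is nonnegative on $\Delta \times \R^{d}$, then since $W_{n} \subseteq \Delta$ for each $n$, the above condition holds and $\mathfrak{f}\in \mathfrak{P}_{2d}$.

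For the forward direction, the key observation is that the union $\bigcup_{n \geq 2d} W_{n}$ is dense in $\Delta$: it consists of all rational points in $\Delta$ with denominator $n$ for some $n \geq 2d$, and the set of such points is dense in the standard simplex. Fixing an arbitrary $t \in \R^{d}$, the polynomial $w \mapsto \Phi_{f}(w,t)$ is continuous on $\Delta$ and nonnegative on the dense subset $\bigcup_{n\geq 2d} W_{n}$, so it is nonnegative on all of $\Delta$. Letting $t$ vary yields $\Phi_{f}\geq 0$ on $\Delta \times \R^{d}$.

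I do not foresee a real obstacle here; the work has already been done in Corollary \ref{cor:degree}, and the only ingredient beyond that is the density of dyadic-style rational points in $\Delta$ together with the continuity of the polynomial $\Phi_{f}$ in $w$. The mild subtlety is simply to write the argument pointwise in $t$ so that continuity can be invoked for a fixed $t$, rather than trying to argue uniformity across $\R^{d}$.
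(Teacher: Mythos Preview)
Your proof is correct and follows essentially the same route as the paper's: both directions rest on Corollary \ref{cor:degree}, the inclusion $W_n\subset\Delta$, and the density of rational points of $\Delta$ (together with continuity of $\Phi_f$ in $w$). The paper phrases the forward direction contrapositively---pick a bad point in $\Delta$, perturb to a nearby rational point, and land in some $W_n$---but this is the same density-plus-continuity argument you give directly.
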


\begin{proof}
Suppose that $\Phi_{f}(s,t)$ is non-negative on $\Delta\times \R^d$.
Let $f^{(n)}=\sum c_{\lambda}p^{(n)}_{\lambda}$. Since $W_{n} \subset
\Delta$ for all $n$ we see from Corollary \ref{cor:degree} that
$f^{(n)}$ is a non-negative form for all $n$ and thus $\mathfrak{f} \in
\mathfrak{P}_{2d}$. 

On the other hand, suppose there exists $\alpha_0 \in \Delta$ such
that $\Phi_{f}(\alpha_0,t)<0$ for some $t\in\R^{d}$. Then we can find
a rational point $\alpha \in \Delta$ with all positive coordinates and sufficiently close to $\alpha_0$
so that $\Phi_{f}(\alpha,t) < 0$.

Let $h$ be the least common multiple of the denominators of $\alpha$. Then we have $\alpha \in W_{ah}$ for all $a\in\N$. Choose $a$ such that $ah \geq 2d$. Then $f^{(ah)}$ is negative at the corresponding point and we have $\mathfrak{f} \notin
\mathfrak{P}_{2d}$.
\end{proof}

\section{Symmetric sums of squares}\label{SEC Symsos}

We now consider symmetric sums of squares. It was already observed in \cite{gatermann-parrilo-2004} that invariance under a group action
allows us to demand sum of squares decompositions which put strong restrictions on the underlying squares. First, we explain the general approach, which uses representation theory and can be used for other groups as well. Our presentation follows the ideas of \cite{gatermann-parrilo-2004} which we present in a slightly different way. The interested reader is advised to consult there for more details.

\subsection{Invariant Sums of Squares}\label{subsec:SOS}
Let $G$ be a finite group acting linearly on $\R^{n}$.
As $G$ acts linearly on $\R^n$ also the $\R-$vector space $\R[X]$ can be viewed as a $G$-module and by Maschke's theorem (the reader may consult for example \cite{serre-b77} for basics in linear representation theory) there exists a decomposition of the form
\begin{eqnarray}\label{eq:decomp}
\R[X] \ = \ V^{(1)} \oplus V^{(2)} \oplus \cdots \oplus V^{(h)} \,
\end{eqnarray}
with $V^{(j)} = W^{(j)}_1 \oplus \cdots \oplus W^{(j)}_{\eta_j}$ and $\nu_j := \dim W^{(j)}_i$.
Here, the $W^{(j)}_i$ are the \emph{irreducible components} and the $V^{(j)}$ are the \emph{isotypic components}, i.e., the direct sum of isomorphic irreducible components. The component with respect to the trivial irreducible
representation is the invariant ring $\R[X]^G$. The elements of the
other isotypic components are called \emph{semi-invariants}. It is classically known that each isotypic component is a finitely generated $\R[X]^{G}$-module (see \cite[Theorem 1.3]{stan}).
To any element $f\in H_{n,d}$ we can associate a \emph{symmetrization} by which we mean its image under the following linear map:
\begin{definition}
For a finite group $G$ the linear map $\mathcal{R}_G:\, H_{n,d}\rightarrow H_{n,d}^{G}$ which is defined by $$\mathcal{R}_G(f):=\frac{1}{|G|}\sum_{\sigma\in G}\sigma(f)$$ is called the Reynolds operator of $G$. In the case of $G=\mathcal{S}_n$ we say that $\mathcal{R}_{\mathcal{S}_n}(f)$ is a symmetrization of $f$ and we write $\Sym(f)$ in this case.
\end{definition}

For a set of polynomials $f_1,\ldots,f_l$ we will write $\sum\R\{f_1,\ldots,f_l\}^2$ to refer to the  sums of squares of elements in the linear span of the polynomials $f_1,\ldots,f_l$. It has already been observed by Gaterman and Parrilo \cite{gatermann-parrilo-2004} that invariant
sums of squares can be written as sums of squares of semi-invariants
using Schur's Lemma. However, a closer inspection of the situation allows in many  cases - as for example in the
case of $\mathcal{S}_n$ - a finer analysis of the decomposition into sums of squares.
Consider a set of forms $\{f_{1,1},\ldots,f_{1,\eta_1},f_{2,1},\ldots,f_{h,\eta_h}\}$ such that for 
  fixed $j$ the forms $f_{j,i}$  generate irreducible components
of $V^{(j)}$. Further assume that they
  are chosen in such a way, that for each $j$ and each pair $(l,k)$ there exists a
  $G$-isomorphism $\rho_{l,k}^{(j)}: V^{j}\rightarrow V^{j}$ which maps  $f_{j,l}$ to $f_{j,k}$. \
Now for every $j$ we consider the set $\{f_{j,1},\ldots f_{j,\eta_j}\}$ which contains only one polynomial per irreducible module. However, since every irreducible module is generated by the $G$-orbit of only one element, every such set uniquely describes the chosen decomposition. We call such a set a \emph{symmetry basis} and show that invariant sums of squares are in fact symmetrizations of sums of squares of a symmetry basis. The following theorem, which we state in a slightly more general setup highlights the use of a symmetry basis.

\begin{thm}\label{THM Decomp}
Let $G$ be a finite group and assume that all  real irreducible representations $V\subset H_{n,d}$ are also irreducible over their complexification. Let $p$ be a form of degree $2d$ which is invariant with
respect to $G$. If $p$ is a sum of squares, then $p$ can be written in the
form
$$p=\sum_{j}^{h} q_j,\text{ where each } q_j\in \sum\R\left\{f_{j,1},\ldots f_{j,\eta_j}\right\}^2.$$
\end{thm}
The main tool for the proof is Schur's Lemma, and we remark that a dual version of this Theorem can be found in \cite[Theorem 3.4]{RLJT} and \cite{riener2011symmetries}.
\begin{proof} 
Let  $p\in H_{n,2d}$ be a $G$-invariant sum of squares. Then there exists a 
symmetric positive semidefinite bilinear form $$B: H_{n,d}\times H_{n,d}\rightarrow \R$$ which is a Gram matrix for $p$, i.e.  for every $x\in\R^{n}$ we can write $p(x)=B(X^{d},X^{d})$, where $X^{d}$ stands for the 
$d$-th power of $x$ in the symmetric algebra of $\R^{n}$. Since $p$ is $G$-invariant, we have $p=\mathcal{R}_G(p)$ and by linearity we may assume that  $B$ is a $G$-invariant bilinear form. 
Now decompose $H_{n,2d}$ as in \eqref{eq:decomp} and consider the restriction of $B$ to $$B^{ij}:V^{(i)}\times V^{(j)}\rightarrow \R \text{ with } i\neq j.$$ For every $v\in V^{(i)}$  the quadratic form $B^{ij}$ defines a linear map $\phi_v:\, V^{(j)}\rightarrow \R$ via $\phi_v(w):=B^{ij}(v,w)$ and so $B^{ij}$ naturally  can be seen as an element of $\Hom^{G}({V^{(i)}}^{*},V^{(j)})$. Since real representations are self dual we have that ${V^{(i)}}^{*}$ and $V^{(j)}$ are not isomorphic and thus by Schur's Lemma   we find that $B^{ij}(v,w)=0$ for all $v\in V^{(i)}$ and $w\in V^{(j)}$. 
So the isotypic components are orthogonal with respect to $B$ and hence it suffices to look at $$B^{jj}: V^{(j)}\times V^{(j)}\rightarrow \R$$ individually. 
We have $V^{(j)}:=\bigoplus_{k=1}^{l} W^{(j)}_{k}$, where each $W^{(j)}_k$ is generated by a semi-invariant $f_{j,k}$, i.e. there is a basis $f_{j,k,1},\ldots,f_{j,k,\nu_j}$ for every $W^{(j)}_k$ such that the basis elements $f_{j,k,i}$ are taken from the orbit of $f_{j,k}$ under $G$. To again use Schur's Lemma we identify $B_j$ with its complexification $B_j^{\C}$, which is possible since we assumed that all representations are irreducible also over $\C$.
Consider a pair $W^{(j)}_{k_1}, W^{(j)}_{k_2}$, where we allow $k_1=k_2$. To apply  Schur's Lemma we relate the quadratic from  $B^{jj}$  to  a linear map 
$\psi^{(j)}_{k_1,k_2}~: W^{(j)}_{k_1}\rightarrow W^{(j)}_{k_2}$ defined on the generating set $f_{j,k_1,1},\ldots,f_{j,k_1,\nu_j}$ by $$\psi^{(j)}_{k_1,k_2}(f_{j,k_1,u}):=\sum_{v}  B^{jj}(f_{j,k_1,u},f_{j,k_2,v}) f_{j,k_2,v}.$$

Since we assumed that $W^{(j)}_{k}$ are absolutely irreducible we have by Schur's Lemma $$\dim(\Hom^G(W^{(j)}_{k_1},W^{(j)}_{k_2}))=1$$ and we can conclude that this map is unique up to scalar multiplication. Therefore it can be represented in the form $\psi^{(j)}_{k_1,k_2}=c_{k_1,k_2}\cdot\rho_{k_1,k_2}$, where $\rho_{k_1,k_2}$ is the $G$-isomorphism  with $\rho_{k_1,k_2}(f_{j,k_1})=f_{j,k_2}$ as above. It therefore follows that
$$B^{jj}(f_{j,k_1,u}, f_{j,k_2,v})=\delta_{u,v}c_{k_1,k_2},$$ where $\delta_{u,v}$ denotes the Kronecker Delta. By considering the matrix of $B$ with respect to the basis $f_{j,k,l}$ of $H_{n,d}$ we see that $p$ has the desired decomposition.


\end{proof}
\begin{remark}
The above statement also holds true in the situation where one looks at sums of squares of elements of an arbitrary $G$-closed submodule $T\subset\R[X]$.
\end{remark}

In some situations it is convenient to formulate the above Theorem \ref{THM Decomp} in terms of matrix polynomials, i.e. matrices with polynomial entries. Given two $k\times k$ symmetric matrices $A$ and $B$ define their inner product as $\langle A,B \rangle=\operatorname{trace}(AB).$ Define a block-diagonal symmetric matrix $A$ with $h$ blocks $A^{(1)},\dots,A^{(h)}$ with the entries of each block given by:
\begin{equation*}
A^{(j)}_{ik}=g_{ik}^{(j)}=\mathcal{R}_G(f_{j,i}\cdot f_{j,k}).
\end{equation*}
Then Theorem \ref{THM Decomp} is equivalent to the following statement:
\begin{cor}\label{COR Decomp}
With the conditions as in Theorem \ref{THM Decomp} let  $p\in \R[X]^G$. Then $p$ is a sum of squares of polynomials in $T$
 if and only if $p$ can be written as
\begin{equation*}
p=\langle A,B \rangle,
\end{equation*}
where $B$ is a positive semidefinite matrix with real entries.
\end{cor}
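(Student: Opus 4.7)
The plan is to observe that Corollary \ref{COR Decomp} is essentially a matrix-language restatement of Theorem \ref{THM Decomp}, so the proof should be a direct computation of the trace inner product together with one elementary fact from linear algebra: principal submatrices of positive semidefinite matrices are themselves positive semidefinite.

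First I would unpack $\langle A,B \rangle = \operatorname{trace}(AB)$. Because $A$ is block-diagonal with blocks $A^{(j)}$ whose entries are $A^{(j)}_{ik}=g_{ik}^{(j)}$, only the diagonal blocks of $B$ (of sizes matching those of $A$) contribute to $\operatorname{trace}(AB)$. Writing $B^{(j)}=(b_{ik}^j)$ for these diagonal blocks, a direct calculation gives
$$\langle A,B\rangle = \sum_{j=1}^h \operatorname{trace}(A^{(j)}B^{(j)}) = \sum_{j=1}^h \sum_{i,k} g_{ik}^{(j)} b_{ki}^{j}.$$
Since $g_{ik}^{(j)}=\operatorname{sym}(f_i^{(j)}f_k^{(j)})$ is symmetric in $i,k$ and $B^{(j)}$ is symmetric, this is exactly the linear combination appearing in Theorem \ref{THM Decomp}.

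For the forward direction, if $p$ is a sum of squares of polynomials in $T$, Theorem \ref{THM Decomp} provides coefficients $b_{ik}^j$ with each $B^{(j)}=(b_{ik}^j)$ positive semidefinite and $p=\sum b_{ik}^j g_{ik}^{(j)}$. I assemble the $B^{(j)}$ into a block-diagonal $B$; this $B$ is positive semidefinite and the calculation above gives $p=\langle A,B\rangle$. For the reverse direction, given $p=\langle A,B\rangle$ with $B\succeq 0$, let $\tilde B$ denote the block-diagonal part of $B$ whose blocks $\tilde B^{(j)}$ are aligned with the blocks of $A$. Since $A$ is block-diagonal, $\langle A,B\rangle=\langle A,\tilde B\rangle$. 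Each $\tilde B^{(j)}$ is a principal submatrix of $B$ and is therefore positive semidefinite. The trace computation then rewrites $p$ in the form of Theorem \ref{THM Decomp}, which immediately yields that $p$ is a sum of squares from $T$.

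There is no genuine obstacle here; the content is purely linear-algebraic. The only point that deserves care is the reduction to block-diagonal $B$ in the reverse direction, which relies on the two observations that $A$'s block-diagonal structure annihilates the off-diagonal blocks of $B$ in the trace, and that dropping these off-diagonal blocks preserves positive semidefiniteness of what remains.
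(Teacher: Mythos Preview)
Your proposal is correct and matches the paper's approach: the paper simply asserts that Corollary~\ref{COR Decomp} is an equivalent restatement of Theorem~\ref{THM Decomp} without spelling out any details, and you have correctly supplied the routine linear-algebraic verification. Your observation about passing to the block-diagonal part of $B$ in the reverse direction (and that principal submatrices of a PSD matrix are PSD) is the only non-trivial point, and it is handled properly.
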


We now aim to apply Theorem \ref{THM Decomp} to a symmetric form $p \in H_{n,2d}^S$. In order to do this we need to identify an explicit representative in every irreducible $\Sn$-submodule of $H_{n,d}$. 
We first recall some useful facts from the representation theory of
$\mathcal{S}_n$. The irreducible representations in this case are the so-called \emph{Specht Modules}, which we will define in the following section. We refer to \cite{james-kerber-b81,sagan-2001} for
more details.
\subsection{Specht Modules as Polynomials}\label{sub-sym}
Let  $\lambda=(\lambda_1,\lambda_2,\ldots,\lambda_l)\vdash n$ be a partition of $n$. A
\emph{Young tableau} of shape $\lambda$ consists of $l$ rows, with
$\lambda_i$
entries in the $i$-th row.
Each entry is an element in $\{1, \ldots, n\}$, and each of these
numbers occurs
exactly once.. 
A \emph{standard Young tableau} is a Young tableau in which all rows and
columns are increasing.
An element $\sigma \in \mathcal{S}_n$ acts on a Young tableau by
replacing each entry by its image under $\sigma$.
Two Young tableaux $T_1$ and $T_2$ are called \emph{row-equivalent}
if the corresponding rows of the two tableaux contain the same numbers.
The classes of row-equivalent Young tableaux are called \emph{tabloids}, and
the equivalence class
of a tableau $T$ is denoted by $\{T\}$. The stabilizer of a
row-equivalence class is called the row-stabilizer denoted by $\RStab_T$. If $R_1,\ldots,R_l$ are the rows or a given Young tableau $T$ this group can be written as 
$$\RStab_T \ = \ \mathcal{S}_{R_1}\times \mathcal{S}_{R_2}\times
\cdots \times \mathcal{S}_{R_l}, $$
where $\mathcal{S}_{R_i}$ is the symmetric group on the elements of row $i$.
The action of $\mathcal{S}_n$ on the equivalence classes of row-equivalent Young tableaux gives rise to the \emph{permutation
module} $M^{\lambda}$
\emph{corresponding to} $\lambda$ which is the $\mathcal{S}_n$-module
defined by  $$M^\lambda=\R\left\{ \{T_1\}, \ldots ,\{T_s\}\right\},$$
where $\{T_1\}, \ldots, \{T_s\}$ is a complete list of
$\lambda$-tabloids and $\R\{ \{T_1\}, \ldots ,\{T_s\}\}$ denotes their
$\R$-linear span. 

Let $T$ be a Young tableau for $\lambda\vdash n$, and let $C_i$ be the
entries in the $i$-th column of $t$.
The group $$\CStab_T \ = \ \mathcal{S}_{C_1}\times \mathcal{S}_{C_2}\times
\cdots \times \mathcal{S}_{C_\nu},$$ 
where $\mathcal{S}_{C_i}$ is the symmetric group elements of columns $i$,
is called the \emph{column stabilizer} of $T$. The irreducible
representations of the symmetric group $\mathcal{S}_n$
are in 1-1-correspondence
with the partitions of $n$, and they are given by the Specht modules, as
explained below. For $\lambda\vdash n$, the
\emph{polytabloid associated with} $T$ is
defined by
$$e_T \ = \ \sum_{\sigma\in \CStab_t} \sgn(\sigma)\sigma\{t\} \, .$$
Then for a partition $\lambda\vdash n$, the \emph{Specht module}
$S^{\lambda}$
is the submodule of the permutation module $M^\lambda$ spanned by the
polytabloids $e_T$. The dimension of $S^{\lambda}$ is given by the
number of standard
Young tableaux for $\lambda \vdash n$, which we will denote by
$s_\lambda$.  

A classical construction of Specht realizes Specht
modules as submodules of the polynomial ring (see \cite{specht-1933}): For $\lambda\vdash n$ let $T_{\lambda}$ be a  standard Young tableau of shape $\lambda$ and
$\mathcal{C}_1,\ldots,\mathcal{C}_{\nu}$ be the columns of
$T_\lambda$. To $T_\lambda$ we associate the monomial
$X^{t_{\lambda}}:=\prod_{i=1}^{n}X_i^{m(i)-1}$, where $m(i)$ is the
index of the row of $T_{\lambda}$ containing $i$.
Note that for any $\lambda$-tabloid $\{T_{\lambda}\}$ the monomial
$X^{T_{\lambda}}$ is well defined,
and the mapping $\{T_{\lambda}\} \mapsto X^{T_{\lambda}}$ is an
$\mathcal{S}_n$-isomorphism.
For any column $\mathcal{C}_i$ of $T_\lambda$ we denote by
$\mathcal{C}_i(j)$ the  element in the  $j$-th row and we associate to it a
Vandermonde determinant:
$$\Van_{\mathcal{C}_{i}} \ := \ \det
\left(
\begin{array}{ccc}
X_{ \mathcal{C}_i(1)}^0& \ldots  &X_{\mathcal{C}_i(k)}^0   \\
 \vdots&  \ddots &\vdots   \\
X_{ \mathcal{C}_i(1)}^{k-1}& \ldots  &X_{\mathcal{C}_i(k)}^{k-1}
\end{array}
\right) \ = \ \prod_{j<l}(X_{\mathcal{C}_i(j)}-X_{\mathcal{C}_i(l)}).$$
\noindent The \emph{Specht polynomial} $sp_{T_{\lambda}}$ associated
to $T_\lambda$ is defined as
\[
  sp_{T_{{\lambda}}} \ := \ \prod_{i=1}^{\nu} \Van_{\mathcal{C}_{i}}
  \ = \
 \sum_{\sigma\in \CStab_{T_{\lambda}}}\sgn(\sigma)\sigma(X^{T_{\lambda}}) \, ,
\]
where $\CStab_{T_{\lambda}}$ is the column stabilizer of $T_\lambda$.

\noindent By the $\mathcal{S}_n$-isomorphism $\{T_{\lambda}\} \mapsto
X^{t_{\lambda}}$,
$\mathcal{S}_n$ acts on $sp_{T_{{\lambda}}}$ in the same way as on
the polytabloid $e_{T_{\lambda}}$.
If $T_{\lambda,1},\ldots,T_{\lambda,k}$ denote all standard Young
tableaux associated to $\lambda$, then the set of polynomials
$sp_{T_{\lambda,1}},\ldots,s_{T_{\lambda,k}}$
are called the \emph{Specht polynomials} associated to $\lambda$. We then have the following Proposition \cite{specht-1933}:
\begin{prop}\label{pro:Specht}
The Specht polynomials $sp_{T_{\lambda,1}},\ldots,s_{T_{\lambda,k}}$ span an
$\mathcal{S}_n$-submodule of $\R[X]$ which is isomorphic to the Specht
module $S^{\lambda}$.
\end{prop}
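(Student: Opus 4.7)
The plan is to exploit the $\mathcal{S}_n$-equivariant identification $\{t_\lambda\} \mapsto X^{t_\lambda}$ that was introduced immediately before the statement, and observe that under this identification polytabloids are sent precisely to Specht polynomials. Since $S^\lambda$ is by definition the submodule of $M^\lambda$ spanned by the polytabloids $e_{t_\lambda}$, the proposition will follow from showing that this identification is an injective $\mathcal{S}_n$-module homomorphism.

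First I would verify that the assignment $\{t_\lambda\} \mapsto X^{t_\lambda} = \prod_{i=1}^{n} X_i^{m(i)-1}$ is well-defined on tabloids. The exponent of $X_i$ is $m(i)-1$, where $m(i)$ is the row containing $i$; since row-equivalent tableaux have the same set of entries in each row, the function $m$ depends only on the row-equivalence class, and so does the monomial. Extending $\R$-linearly gives a map $\varphi : M^\lambda \to \R[x]$. Next I would check $\mathcal{S}_n$-equivariance: the action of $\sigma \in \mathcal{S}_n$ on $\{t_\lambda\}$ replaces each entry $i$ by $\sigma(i)$, so in the monomial the exponent $m(i)-1$ gets attached to $X_{\sigma(i)}$ instead of $X_i$. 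This is the same as the permutation action on $\R[x]$, so $\varphi(\sigma\{t_\lambda\}) = \sigma\,\varphi(\{t_\lambda\})$.

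Injectivity of $\varphi$ is the one step one should not skip: two distinct $\lambda$-tabloids differ in at least one row, so there is some $i$ whose row index differs, which forces $X^{t_\lambda}$ and $X^{t'_\lambda}$ to have different multidegrees. Thus $\varphi$ sends distinct tabloid basis elements of $M^\lambda$ to distinct monomials, hence is injective. Consequently $\varphi$ is an $\mathcal{S}_n$-module isomorphism onto its image.

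Finally, applying $\varphi$ to the polytabloid
\[
e_{t_\lambda} \;=\; \sum_{\sigma \in \CStab_{t_\lambda}} \sgn(\sigma)\, \sigma\{t_\lambda\}
\]
and using equivariance yields
\[
\varphi(e_{t_\lambda}) \;=\; \sum_{\sigma \in \CStab_{t_\lambda}} \sgn(\sigma)\, \sigma(X^{t_\lambda}) \;=\; sp_{t_\lambda},
\]
which is the Specht polynomial as defined in the excerpt. Therefore $\varphi$ restricts to an injective $\mathcal{S}_n$-equivariant map from $S^\lambda = \mathrm{span}\{e_{t_{\lambda,1}},\dots,e_{t_{\lambda,k}}\}$ onto $\mathrm{span}\{sp_{t_{\lambda,1}},\dots,sp_{t_{\lambda,k}}\}$, giving the desired isomorphism. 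The only mildly delicate step is the well-definedness/injectivity bookkeeping; the rest is formal.
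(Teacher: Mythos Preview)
Your proof is correct and follows exactly the approach the paper sets up: the paper does not actually prove this proposition but cites \cite{specht-1933}, while the surrounding text already records that $\{t_\lambda\}\mapsto X^{t_\lambda}$ is an $\mathcal{S}_n$-isomorphism and that under it $e_{t_\lambda}$ corresponds to $sp_{t_\lambda}$. You have simply filled in the well-definedness, equivariance, and injectivity checks that the paper leaves implicit.
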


The Specht polynomials identify a submodule of $\R[X]$ isomorphic to $\mathcal{S}^{\lambda}$. In order to get a decomposition of the entire ring $\R[X]$ we will use a generalization of this construction which is described in the next section.

\subsection{Higher Specht polynomials and the decomposition of $\R[X]$}\label{sub:higherspecht}
In what follows we will need to understand the decomposition of the polynomial ring $\R[X]$ and $\mathcal{S}_n$-module $H_{n,d}$ in terms of $\mathcal{S}_n$-irreducible representations. Notice that such a decomposition is not unique. 
It is classically known that the ring $\R[X]$ is a free module of dimension $n!$ over the ring of symmetric polynomials. Similarly, every isotypic component is a free $\R[X]^{\Sn}$-module. Therefore, one general strategy in order to get a symmetry basis of $\R[X]$ consists in building a free module basis for $\R[X]$ over $\R[X]^{\Sn}$ which additionally is symmetry adapted, i.e., which respects a decomposition into irreducible $\Sn$-modules.  One such construction, which generalizes Specht's original construction presented above is due to  Ariki, Terasoma, and Yamada \cite{ATY}.

\begin{definition}
Let $n\in\N$.
\begin{enumerate}
\item A finite sequence $w=(w_1,\ldots,w_n)$ of non-negative integers  is called a word of length $n$. A word $w$ of length $n$ is called a permutation if the set of non-negative integers forming a word of length $n$  is $\{1,\ldots,n\}$.
\item Given a word $w$ and a permutation $u$ we define the monomial associated to the pair as  $X_u^{w}:=X_{u_1}^{w_1}\cdots X_{u_n}^{w_n}$.
\item Given a permutation $w$. We associate to $w$ is index  denoted by $i(w)$, by constructing the following word of length $n$. The word $i(w)$ contains 0 exactly at the same position where $1$ occurs in $w$ and  the other entries we defined recursively with the following rule: Suppose that  the entry in $i(w)$ at a given position is $c$ and that $k$ occurs in $w$ at the same position then $i(w)$ should be also $c$ if it lies to the right of $k$ and it should be $c+1$ is it lies to the left of $k$.
\item For $\lambda\vdash n$ and $T$ be a standard Young tableau of shape $\lambda$ we define the \emph{word} of $T$ - denoted by $w(T)$ - by collecting the entries of $T$ from the bottom to the top in consecutive columns starting from the left. 
\item For a pair  $(T,V)$ of standard $\lambda$-tableaux we define the monomial associated to this pair as $X_{w(V)}^{i(w(T))}.$
\end{enumerate}
\end{definition}

%

\begin{ex}
Consider the tableau $$\Yvcentermath1 T=\young(124,35).$$ The resulting word is given by $$w(T)=31524,$$
with  $$i(w(T))=10001.$$

Taking $$\Yvcentermath1 V= \young(135,24)$$ we obtain $X_{w(V)}^{i(w(T))}=X_1^0X_2^1X_3^0X_4^2X_5^1$. 
\end{ex} 

\begin{definition}
Let $\lambda\vdash n$ and $T$ be a $\lambda$-tableau. Then the \emph{Young symmetrizer} associated to $T$ is the element in the group algebra $\R[\Sn]$ defined to be 
$$\varepsilon_T=\sum_{\sigma\in \RStab_T}\sum_{\tau\in \CStab_T} \sgn( \tau) \tau\sigma.$$
Now let $T$ be a standard Young tableau, and define the \emph{higher Specht polynomial} associated with the pair $(T,V)$ to be 
$$F_V^T(X_1,\ldots,X_n):=\varepsilon_V(X_{w(V)}^{i(w(T))}).$$

For $\lambda\vdash n$ we will denote by $$\mathcal{F}_\lambda=\{F_V^{T}, \text{ where }T,V \text{ run over all  standard }\lambda\text{ tableaux }\}$$ the set of all standard higher Specht polynomials corresponding to $\lambda$  and by $$\mathcal{F}=\bigcup_{\lambda\vdash n} \mathcal{F}_\lambda$$ the set of all standard higher Specht polynomials. 
\end{definition}
\begin{remark}
Let $s_{\lambda}$ denote the number of standard Young tableaux of shape $\lambda$.
It follows from the so-called Robinson-Schensted correspondence (see \cite{sagan-2001}) that 
$$\sum_{\lambda\vdash n} s_\lambda^2=n!.$$ Therefore the cardinality of $\mathcal{F}$ is exactly $n!$.
\end{remark}
The importance of the higher Specht polynomials now is summarized in the following Theorem which can be found in  \cite[Theorem 1]{ATY}.
\begin{thm}\label{thm:higher}
The following holds for the set of higher Specht polynomials.
\begin{enumerate}
\item The set $\mathcal{F}$ is a free basis of the ring $\R[X]$ over the invariant ring $\R[X]^{\Sn}$. 
\item For any $\lambda\vdash n$ and standard $\lambda$-tableau $T$, the space spanned by the polynomials in $$\mathcal{F}^T_\lambda:=\{F^T_V, \text{ where } V \text{ runs over all standard } \lambda\text{-tableaux} \}$$ is an irreducible $\mathcal{S}_n$-module isomorphic to the Specht module $S^{\lambda}$. 
\end{enumerate}
\end{thm}
For every $\lambda\vdash n$ we denote by $V_0^{\lambda}$ the standard $\lambda$ tableau with entries $\{1,\ldots,\lambda_1\}$ in the first row $\{\lambda_1+1,\ldots,\lambda_2\}$ in the second row and so on. 
Consider the set $$\mathcal{Q}_\lambda:=\{F^T_{V_0^{\lambda}}, \text{ where } T \text{ runs over all standard } \lambda\text{-tableaux} \},$$ which is of cardinality $s_\lambda$. The set $\mathcal{Q}_\lambda$ is a symmetry basis  of the vector space spanned by  $\mathcal{F}$. 
Using these polynomials we define  $s_\lambda\times s_\lambda$ matrix polynomials $Q^{\lambda}$ by:
\begin{equation}\label{eqn:Q}Q^\lambda({T,T'}):=\Sym(F^T_{V_0^{\lambda}}\cdot F^{T'}_{V_0^{\lambda}}),\end{equation}
where $T,T'$ run over all standard $\lambda$-tableaux. 
Since by $(1)$ in Theorem \ref{thm:higher} we know that every polynomial $h\in\R[X]$ can be uniquely written as a linear combination of elements in $\mathcal{F}$ with coefficients in $\R[X]^{\Sn}$, the following theorem can be thought of as a generalization of Corollary \ref{COR Decomp} to sums of squares from an $\mathcal{S}_n$-module with coefficients in an $\mathcal{S}_n$-invariant ring (see also \cite[Theorem 6.2] {gatermann-parrilo-2004}):

\begin{thm}\label{thm:GP2}
Let $p\in\R[X]^{\Sn}$ be a symmetric polynomial. Then $p$ is a sum of squares if and only if it can be written in the form
$$p=\sum_{\lambda\vdash n} \langle B^\lambda,Q^\lambda\rangle,$$
where $Q^\lambda$ is defined in \eqref{eqn:Q} and each $B^\lambda\in\R[X]^{s_\lambda \times s_\lambda}$ is a sum of symmetric squares matrix  polynomial, i.e. $$B^\lambda(x)=L^{t}(x)L(x)$$ for some matrix polynomial $L(x)$ whose entries are symmetric polynomials.\end{thm}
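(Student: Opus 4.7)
The plan is to combine the Chevalley decomposition \eqref{eq:Chev} with the structural decomposition of symmetric sums of squares from Corollary~\ref{COR Decomp}, replacing real scalar coefficients by coefficients in the invariant ring $\R[x]^{\mathcal{S}_n}$. The easy direction is a direct computation: if $P^\lambda = (L^\lambda)^t L^\lambda$ with the entries of $L^\lambda$ symmetric polynomials, then since symmetric polynomials commute with $\operatorname{sym}$,
$$\langle P^\lambda, Q^\lambda\rangle = \sum_{k}\operatorname{sym}\Bigl(\bigl(\textstyle\sum_i L^\lambda_{ki}q_i^\lambda\bigr)^2\Bigr),$$
and each $\operatorname{sym}(g^2) = \tfrac{1}{n!}\sum_{\sigma}(\sigma g)^2$ is manifestly a sum of squares, so summing over $\lambda$ exhibits $f$ as a sum of squares.

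For the converse I would assume $f=\sum_k h_k^2$ and use symmetry to write $f = \operatorname{sym}(f) = \sum_k \operatorname{sym}(h_k^2)$. Invoking \eqref{eq:Chev} together with $\mathcal{H}_{\mathcal{S}_n}=\bigoplus_{\lambda} s_\lambda S^\lambda$, each $h_k$ decomposes as $h_k = \sum_\lambda h_k^\lambda$ with $h_k^\lambda$ in the $\lambda$-isotypic component, expressible as an $\R[x]^{\mathcal{S}_n}$-linear combination of a basis of $s_\lambda S^\lambda$. The cross-isotypic vanishing $\operatorname{sym}(h_k^\lambda h_k^\mu) = 0$ for $\lambda\neq\mu$ (the identity recorded just after \eqref{EQN Decomp1}) remains valid over the invariant ring because multiplication by symmetric polynomials preserves the isotypic decomposition; hence $\operatorname{sym}(h_k^2) = \sum_\lambda \operatorname{sym}((h_k^\lambda)^2)$ and the problem decouples across $\lambda$.

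For each fixed $\lambda$ I would then run the argument of Theorem~\ref{THM Decomp} inside the $\lambda$-isotypic component, but with coefficients in $\R[x]^{\mathcal{S}_n}$ rather than in $\R$. The target identity is
$$\sum_k \operatorname{sym}\bigl((h_k^\lambda)^2\bigr) = \sum_k \operatorname{sym}\Bigl(\bigl(\textstyle\sum_i a^\lambda_{k,i}q_i^\lambda\bigr)^2\Bigr)$$
for some $a^\lambda_{k,i}\in\R[x]^{\mathcal{S}_n}$; setting $L^\lambda_{k,i}=a^\lambda_{k,i}$ and $P^\lambda = (L^\lambda)^t L^\lambda$ and reversing the easy-direction computation then produces $f=\sum_\lambda\langle P^\lambda,Q^\lambda\rangle$.

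The main obstacle will be this last reduction: although $h_k^\lambda$ lives in an isotypic component of dimension $s_\lambda^2$ over $\R[x]^{\mathcal{S}_n}$, the symmetrized square $\operatorname{sym}((h_k^\lambda)^2)$ only depends on data that can be encoded using the $s_\lambda$ elements of the symmetry basis $\{q_i^\lambda\}_i$. This collapse is the content of the scalar Theorem~\ref{THM Decomp}, proved in \cite{gatermann-parrilo-2004} via Schur's lemma applied to the $\mathcal{S}_n$-equivariant isomorphisms between copies of $S^\lambda$; upgrading the ground field from $\R$ to $\R[x]^{\mathcal{S}_n}$ should be routine because multiplication by symmetric polynomials is $\mathcal{S}_n$-equivariant and commutes with $\operatorname{sym}$, so the Schur-lemma identities pass through verbatim over the invariant ring.
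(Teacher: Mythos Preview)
The paper does not give its own proof of this theorem: it simply states it and cites \cite[Theorem 6.2]{gatermann-parrilo-2004}. Your sketch is a correct outline of essentially that argument --- the Chevalley decomposition \eqref{eq:Chev} to separate invariant coefficients from harmonic parts, isotypic orthogonality under $\operatorname{sym}$, and then Schur's lemma to reduce the symmetrized products to the entries $Q^\lambda_{ij}=\operatorname{sym}(q_i^\lambda q_j^\lambda)$.

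One small bookkeeping point: your ``target identity'' with the same index $k$ on both sides is slightly off. For a single $h_k^\lambda$, writing $h_k^\lambda=\sum_{i,m} c_{im}\,q_{i,m}^\lambda$ in a compatible basis $\{q_{i,m}^\lambda\}$ of the $i$-th copy of $S^\lambda$ (with $q_{i,1}^\lambda=q_i^\lambda$), the Schur-lemma computation gives $\operatorname{sym}\bigl((h_k^\lambda)^2\bigr)=\langle CBC^t,\,Q^\lambda\rangle$ where $B$ is the (positive definite) invariant form on $S^\lambda$. Factoring $B=E^tE$ over $\R$ yields $CBC^t=(EC^t)^t(EC^t)$, which is indeed $L^tL$ with symmetric-polynomial entries, but $L$ has $\dim S^\lambda$ rows rather than one. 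So the right-hand side of your displayed identity should in general have more terms than the left; after stacking over $k$ you still get a single $P^\lambda=L^tL$, so nothing is lost. With that adjustment the argument goes through.
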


Each entry of the matrix $Q^{\lambda}$ is a symmetric polynomial and thus  can be represented as a polynomial in any set of generators of the ring of symmetric polynomials. We will use the the power means $p_1,\ldots,p_n$ to phrase the next theorem. However, any other choice works similarly. With this choice of basis it follows that there exists a matrix polynomial $\tilde{Q}^{\lambda}(z_1,\ldots,z_n)$ in $n$ variables $z_1,\ldots,z_n$
such that
\begin{equation}\label{eqn:Qt} \tilde{Q}^{\lambda}(p_1(x),\ldots,p_n(x))=Q^{\lambda}(x).\end{equation}

With this notation  can restate Theorem \ref{thm:GP2} in the following way:
\begin{thm}\label{thm:GP2'}
Let $f\in\R[X]^{\Sn}$ be a symmetric polynomial and $g\in\R[z_1,\ldots,z_n]$ such that $f=g(p_1,\ldots,p_n)$. 
Then $f$ is a sum of squares if and only if $g$ can be written in the form
$$g=\sum_{\lambda\vdash n} \langle B^\lambda,\tilde{Q}^\lambda\rangle,$$
where $\tilde{Q}^\lambda$ is defined in \eqref{eqn:Qt} and each $B^\lambda\in\R[z]^{s_\lambda \times s_\lambda}$ is a sum of squares matrix  polynomial, i.e. $B^\lambda:=L(z)^{t}L(z)$ for some matrix polynomial $L$. 
\end{thm}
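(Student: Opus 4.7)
The plan is to show that Theorem \ref{thm:GP2'} is a straightforward reformulation of Theorem \ref{thm:GP2} obtained by pulling back along the substitution $z_i\mapsto p_i$. The crucial structural fact, already recalled at the start of Section~2, is that the normalized power sums $p_1,\ldots,p_n$ are algebraically independent generators of $\R[x]^{\mathcal{S}_n}$. Consequently the substitution homomorphism
$$\Psi:\R[z_1,\ldots,z_n]\longrightarrow \R[x]^{\mathcal{S}_n},\qquad \Psi(h)=h(p_1,\ldots,p_n),$$
is a ring isomorphism, and in particular the polynomial $g$ with $g(p_1,\ldots,p_n)=f$ is uniquely determined by $f$. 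The matrix polynomials $\tilde{Q}^\lambda$ were built precisely so that $\Psi(\tilde{Q}^\lambda)=Q^\lambda$ entrywise.

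For the ``if'' direction I would start with a decomposition $g=\sum_{\lambda\vdash n}\langle P^\lambda,\tilde{Q}^\lambda\rangle$ with $P^\lambda=L(z)^{t}L(z)$. Applying $\Psi$ entrywise to $L(z)$ produces a matrix $L(p_1,\ldots,p_n)$ whose entries are symmetric polynomials, so $\Psi(P^\lambda)=L(p_1,\ldots,p_n)^{t}L(p_1,\ldots,p_n)$ is a sum of symmetric squares matrix polynomial in the sense of Theorem \ref{thm:GP2}. Since $\Psi$ is a ring homomorphism it commutes with taking traces of matrix products, so applying $\Psi$ to the given decomposition of $g$ yields $f=\sum_\lambda\langle \Psi(P^\lambda),Q^\lambda\rangle$, and Theorem \ref{thm:GP2} then certifies that $f$ is a symmetric sum of squares.

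For the converse, I would begin with the decomposition supplied by Theorem \ref{thm:GP2}: $f=\sum_\lambda\langle P^\lambda_{\mathrm{sym}},Q^\lambda\rangle$ where $P^\lambda_{\mathrm{sym}}=M^{t}M$ for a matrix polynomial $M$ whose entries all lie in $\R[x]^{\mathcal{S}_n}$. Applying $\Psi^{-1}$ entrywise to $M$ produces a unique lift $\tilde{M}(z)\in\R[z_1,\ldots,z_n]^{s_\lambda\times s_\lambda}$, and $P^\lambda:=\tilde{M}(z)^{t}\tilde{M}(z)$ is then a sum of squares matrix polynomial in $z$ with $\Psi(P^\lambda)=P^\lambda_{\mathrm{sym}}$. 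The element $\sum_\lambda\langle P^\lambda,\tilde{Q}^\lambda\rangle\in\R[z_1,\ldots,z_n]$ maps to $f$ under $\Psi$, and by the uniqueness of the representation of a symmetric polynomial in terms of $p_1,\ldots,p_n$ it must equal $g$.

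I do not expect any genuine obstacle in this argument: once Theorem \ref{thm:GP2} is in hand, the content of Theorem \ref{thm:GP2'} is purely a change of variables encoded by the isomorphism $\Psi$. The only point that needs mild care is that the SOS structure of the matrix polynomials is preserved in both directions; this follows immediately from $\Psi$ being a ring homomorphism, which sends $L^{t}L$ to $\Psi(L)^{t}\Psi(L)$ and vice versa via $\Psi^{-1}$.
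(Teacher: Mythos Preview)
Your argument is correct and matches the paper's approach: the paper does not give a separate proof at all, simply presenting Theorem \ref{thm:GP2'} as a restatement of Theorem \ref{thm:GP2} after noting that each entry of $Q^\lambda$ is symmetric and hence a polynomial in $p_1,\ldots,p_n$. Your write-up just spells out explicitly the substitution isomorphism $\Psi$ and why it carries SOS matrix polynomials to SOS matrix polynomials in both directions, which is exactly the implicit content of the paper's one-line remark.
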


While Theorems \ref{thm:GP2} and \ref{thm:GP2'} give a characterization of symmetric sums of squares in a given number of variables, we need to understand the behavior of the $\mathcal{S}_n$-module $H_{n,d}$ for polynomials of a fixed degree $d$ in a growing number of variables $n$.  This will be done in the next section.

%

\subsection{The cone $\Sigma_{n,2d}^S$}\label{sub:fixeddegree}
A symmetric sum of squares $f \in \Sigma^S_{n,2d}$ has to be a sum of squares from $H_{n,d}$. Therefore we now consider restricting the degree of the squares in the underlying sum of squares representation. With a little abuse of notation we denote by $\mathcal{F}_{n,d}$ be the vector space spanned by higher Specht polynomials for the group $\mathcal{S}_n$  of degree at most $d$.  Further, for a partition $\lambda \vdash n$ let $\mathcal{F}_{\lambda,d}$ denote the span of the higher Specht polynomials of degree at most $d$ corresponding to the Specht module $\mathcal{S}^{\lambda}$, i.e., $\mathcal{F}_{\lambda,d}$ is exactly  the isotypic component of $\mathcal{F}_{n,d}$ corresponding to  $\mathcal{S}^{\lambda}$.
In order to describe this isotypic component combinatorially,  recall that the degree of the higher Specht polynomial $F_T^S$ is given by the charge $c(S)$ of $S$. Thus, it follows from the above construction that $$\mathcal{F}_{\lambda,d}=\operatorname{span} \{F_T^{S}: S,T \text{ are standard }\lambda\text{-tableaux  and } c(S)\leq d\}.$$

We now show that sums of squares of degree $2d$ in $n$ variables can be constructed by symmetrizing sums of squares in $2d$ variables. So we first consider the case $n=2d$. Let $$\mathcal{F}_{2d,d}=\bigoplus_{\lambda \vdash 2d} m_{\lambda}S^{\lambda},$$
be the decomposition of $\mathcal{F}_{2d,d}$ as an $\mathcal{S}_{2d}$-module. The following Proposition gives the multiplicities of the different $\mathcal{S}_n$ modules appearing in the vector space of homogeneous polynomials of degree $d$.
\begin{prop}
The multiplicities $m_{\lambda}$ of the $\mathcal{S}_n$-modules $\mathcal{S}^\lambda$ which appear in an isotypic decomposition  $H_{n,d}$ coincide with  the number of standard $\lambda$-tableaux $S$ with the charge of $S$ at most $d$: $c(S)\leq d$.
\end{prop}

For a partition $\lambda \vdash 2d$ and $n \geq 2d$ define a new partition $\lambda^{(n)} \vdash n$ by simply increasing the first part of $\lambda$ by $n-2d$: $\lambda^{(n)}_1=\lambda_1+n-2d$ and $\lambda^{(n)}_i=\lambda_i$ for $i \geq 2$.  Then the decomposition  Theorem  \ref{thm:higher}   in combination with  \cite[Theorem 4.7.]{RLJT} yields that
$$\mathcal{F}_{n,d}=\bigoplus_{\lambda \vdash 2d} m_{\lambda}\mathcal{S}^{\lambda^{(n)}}.$$

For every $\lambda\vdash 2d$ we choose $m_\lambda$-many higher Specht polynomials 
$\{q_1^{\lambda},\ldots,q_{m_\lambda}^{\lambda}\}$ that form a symmetry basis of the $\lambda$-isotypic component of $\mathcal{F}_{2d,d}$.

 Let $q_\lambda=(q_1^{\lambda},\ldots,q_{m_\lambda}^{\lambda})$ be a vector with entries $q_i^{\lambda}$. As before we construct a matrix $Q^{\lambda}_{2d}$ by:
$$Q^{\lambda}_{2d}=\Sym_{2d}(q_{\lambda}^t q_\lambda) \qquad Q_{2d}^\lambda({i,j})=\Sym_{2d}(q^{\lambda}_{i}\cdot q^{\lambda}_{j}).$$ Further, we define a matrix $Q_n^{\lambda}$ by $$Q_{n}=\Sym_{n}(q_{\lambda}^t q_\lambda) \qquad Q_n^{\lambda}(i,j)=\Sym_n q_i^{\lambda}q_j^{\lambda}.$$
By construction we have the following:
\begin{prop}\label{COR Sym}
The matrix $Q_n^{\lambda}$ is the $\mathcal{S}_n$-symmetrization of the matrix $Q_{2d}^{\lambda}$:
$$Q_n^{\lambda}=\Sym_n Q_{2d}^{\lambda}.$$
\end{prop}

We now give a parametric description of the family of cones $\Sigma^S_{n,2d}$. Note again, that this statement is given in terms of a particular basis, but similarly can be stated with any set of generators.
\begin{thm}\label{thm:GP3}
Let $f:=\sum_{\lambda\vdash 2d} c_\lambda p_\lambda^{(n)}\in H_{n,2d}^S$. Then $f$ is a sum of squares if and only if it can be written in the form
$$f=\sum_{\lambda\vdash 2d} \langle B^\lambda,Q_n^\lambda\rangle,$$
where each $B^\lambda\in\R[p_1^{(n)},\ldots,p_{d}^{(n)}]^{m_\lambda \times m_\lambda}$ is a sum squares matrix of power sum  polynomials, i.e. $$B^\lambda=L_{\lambda}^{t}L_{\lambda}$$ for some matrix polynomial $L_{\lambda}(p_1^{(n)},\ldots,p_{d}^{(n)})$ whose entries are weighted homogeneous forms. 

Additionally, we have for every column $k$ of $L_{\lambda}$
$$\deg_w Q_n^{\lambda}(i,k)+2\deg_w L_{\lambda}(k,i)=2d,$$ or equivalently every entry $B^{\lambda}(i,j)$ of $B^{\lambda}$ is a weighted homogeneous form such that,
$$\deg_w Q_n^{\lambda}(i,j)+\deg_w B^{\lambda}(i,j)=2d.$$ 

\end{thm}
\begin{proof}
In order to apply Theorem \ref{thm:GP2} to our fixed degree situation we have to show that the forms $\{q_1^{\lambda},\ldots,q_{m_\lambda}^{\lambda}\}$ when viewed as functions in $n$ variables also form a symmetry basis of the $\lambda^{(n)}$-isotypic component of $\mathcal{F}_{n,d}$ for all $n \geq 2d$. Indeed consider a standard Young tableaux $t_{\lambda}$ of shape $\lambda$ and construct a standard Young tableau $t_{\lambda^{(n)}}$ of shape $\lambda^{(n)}$ by adding numbers $2d+1,\dots,n$ as rightmost entries of the top row of $t_{\lambda^{(n)}}$, while keeping the rest of the filling of $t_{\lambda^{(n)}}$ the same as for $t_{\lambda}$. It follows by construction of the Specht polynomials that $$sp_{t_{\lambda}}=sp_{t_{\lambda^{(n)}}}.$$
We may assume, that the $q_k^{(\lambda)}$ were chosen such that they map to $sp_{t_{\lambda}}$ by an $\mathcal{S}_{2d}$-isomorphism. We observe that $sp_{t_{\lambda}}$ (and therefore $sp_{t_\lambda^{(n)}}$) and $q_k^{\lambda}$ do not involve any of the variables $X_{j}$, $j>2d$. Therefore both are stabilized by $\mathcal{S}_{n-2d}$ (operating on the last $n-2d$ variables), and further the action on the first $2d$ variables is exactly the same. Thus there is an $\mathcal{S}_{n}$-isomorphism mapping $q_k^{\lambda}$ to $sp_{t_\lambda^{(n)}}$ and the $\mathcal{S}_{n}$-modules generated by the two polynomials are isomorphic. Therefore it follows that $q_k^{(\lambda)}$ also form a symmetry basis of the $\lambda^{(n)}$ isotypic component of $\mathcal{F}_{n,d}$. 
\end{proof}

\begin{remark}
We remark that the sum of squares decomposition of $f=\sum_{\lambda \vdash 2d} \langle B^{\lambda},Q^{\lambda}_n \rangle$, with $B^{\lambda}=L_\lambda^tL_{\lambda}$ can be read off as follows:
\begin{equation}\label{EQN decomp}f=\sum_{\lambda \vdash 2d} \Sym_n q_{\lambda}^t B^{\lambda}q_{\lambda} =\sum_{\lambda \vdash 2d} \Sym_n \left( L_{\lambda}q_{\lambda} \right)^tL_{\lambda}q_{\lambda} .\end{equation}
In particular, if for a fixed $\lambda\vdash n$ and  for every $1\leq i\leq m_\lambda$ we denote   $\delta_i:=d-\deg q_i^{\lambda}$, then the set  of polynomials

\begin{equation}\label{eq:basis}
 \bigcup_{i=1}^{m_\lambda}\bigcup_{\nu\vdash \delta_i}\left\{ q_i^{\lambda}\cdot p_\nu\right\}
 \end{equation}
 
is a symmetry basis of the isotypic component   $H_{n,d}$ corresponding to $\lambda$.
\end{remark}

\subsection{The dual cone of symmetric sums of squares}\label{sub:dual}

Recall, that for a convex cone $K\subset\R^n$ the dual
cone $K^*$ is defined as
$$K^{*}:=\{\l\in\text{Hom}(\R^n,\R)\,:\,\ell(x)\geq 0\, \quad \text{ for all } \quad
x\in K\}.$$ Our analysis of the dual cone $(\Sigma_{n,2d}^S)^*$ proceeds similarly
to the analysis of the dual cone in the non-symmetric situation given
in \cite{B2, blekherman2017extreme}. 

Let $S_{n,d}$ be the vector space of real quadratic forms on
$H_{n,d}$. Let  $S_{n,d}^+$ be the cone of positive semidefinite quadratic forms in  $S_{n,d}$. An element
$\mathcal{Q}\in S_{n,d}$ is said to be $\mathcal{S}_n$-invariant if
$\mathcal{Q}(f)=\mathcal{Q}(\sigma(f))$ for all
$\sigma\in \mathcal{S}_n$, $f \in H_{n,d}$. We will denote by
$\bar{S}_{n,d}$ the space of
$\mathcal{S}_n$-invariant quadratic forms on $H_{n,d}$. Further we can
identify a linear functional $\l\in (H_{n,2d}^{S})^*$  with a
quadratic form $\mathcal{Q}_{\l}$
defined by $$\mathcal{Q}_\ell(f) = \ell(\Sym(f^2)).$$

Let  $\bar{S}_{n,d}^+$ be the cone of positive semidefinite forms in
$\bar{S}_{n,d},$ i.e.,
$$\bar{S}_{n,d}^+:=\{\mathcal{Q}\in \bar{S}_{n,d}\,:\, \mathcal{Q}(f)\geq 0\text{ for all } f\in
H_{n,d}\}.$$

The following Lemma is straightforward, but very important, as it allows
us to identify the elements of dual cone  $\l\in(\Sigma_{n,2d}^S)^*$
with quadratic forms $\mathcal{Q}_{\ell}$ in $\bar{S}_{n,d}^+$.

\begin{lemma}\label{le:psd}
A linear functional $\ell \in(H_{n,2d}^S)^*$ belongs to the dual cone
$(\Sigma_{n,2d}^S)^*$ if and only if the quadratic form $\mathcal{Q}_{\ell}$ is
positive semidefinite.
\end{lemma}

Since  for $\ell \in(H_{n,2d}^S)^*$ we have $\mathcal{Q}_{\ell}\in\bar{S}_{n,d}$ Schur's Lemma again applies and we can use the symmetry basis constructed above to simplify the condition that $\mathcal{Q}_{\ell}$ is
positive semidefinite. In order to arrive at a dual statement of Corollary \ref{thm:GP3} we construct the following matrices:

\begin{definition}\label{def:M}
For every partition  $\lambda\vdash 2d$ consider the block-matrix $M_{n,\lambda}$ defined by
$$M_{n,\lambda}^{(i,j)}(\alpha,\beta):=\ell(p_{\alpha}\cdot p_{\beta}\cdot Q^{\lambda}_n(i,j)),$$
where in each block $i,j$ the indices $(\alpha,\beta)$ run through all pairs of weakly degreasing sequences $\alpha=(\alpha_1,\ldots,\alpha_a)$ and $\beta=(\beta_1,\ldots,\beta_b)$  such that $2d-\deg_w Q^{\lambda}_n(i,j)=\alpha_1+\ldots+\alpha_a+\beta_1+\ldots+\beta_b$.

\end{definition}
With this notation the following Lemma is just the dual version of Corollary \ref{thm:GP3} and is established by expressing Lemma  \ref{le:psd} in the basis given in \eqref{eq:basis}:
\begin{lemma}\label{thm:GP4}
Let $\ell\in H_{n,2d}^{*}$ be a linear functional. Then $\ell\in (\Sigma_{n,2d})^{*}$ if and only if for all $\lambda\vdash 2d$  the above matrices $M_{n,\lambda}$ are positive semidefinite.
\end{lemma}

\indent In order to examine the kernels of quadratic forms we use the
following construction. Let $W\subset H_{n,d}$ be any linear subspace.
We define $W^{<2>}$ to be the symmetrization of the degree $2d$ part of
the ideal generated by $W$:
$$W^{<2>}:=\left\{ h\in H_{n,2d}^S\,:\;h=\Sym\left(\sum f_i
g_i\right)\;\text{ with } f_i \in W \text{ and  }g_i \in
H_{n,d}\right\}.$$

In Lemma \ref{le:psd} we identified the
dual cone  $(\Sigma_{n,2d}^{S})^*$ with a
linear section of the cone of positive semidefinite quadratic forms  $S_{n,d}^+$ with the
subspace $\bar{S}_{n,d}$ of symmetric quadratic forms. By a slight
abuse of terminology we think of positive semidefinite forms
$Q_{\ell}$ as elements of the dual cone $(\Sigma_{n,2d})^*$. The following important Proposition is a straightforward adaptation of the equivalent result in the non-symmetric case \cite[Proposition 2.1]{blekherman2017extreme}:

\begin{prop}\label{COR HYPER}
Let $\ell \in (\Sigma^{S}_{n,2d})^*$ be a linear functional non-negative on squares and let $W_{\ell}\subset H_{n,d}$ be the kernel of the quadratic form $\mathcal{Q}_{\ell}$. The linear functional $\ell$ spans an extreme ray of 
$(\Sigma_{n,2d}^S)^*$ if and only if $W_{\ell}^{<2>}$ is a hyperplane in $H_{n,2d}^S$. Equivalently, the kernel of $\mathcal{Q}_\ell$ is maximal, i.e. if $\ker \mathcal{Q}_\ell\subseteq \ker \mathcal{Q}_m$ for some $m \in H_{n,2d}^*$ then $m=\lambda\ell$ for some $\lambda \in\R$.
\end{prop}

\indent The dual correspondence yields that any facet $F$ of a cone $K$,
i.e. any maximal face of $K$, is given by an extreme ray of the dual cone $K^*$.
More precisely, for any maximal face $F$ of $K$ there exists an
extreme ray of $K^*$ spanned by a linear functional $\ell \in K^*$
such that $$F=\{x\in K\,:\, \text{such that } \ell(x)=0\}.$$

We now aim to characterize the extreme
rays of $(\Sigma_{n,2d}^{S})^*$ which are not extreme rays of the cone $(\mathcal{P}_{n,2d}^{S})^*$. For $v \in \R^n$ define a linear functional 
$$\ell_v:H^S_{n,2d} \rightarrow R\quad \ell_v(f)=f(v).$$
We say that the linear functional $\ell_v$ corresponds to point evaluation at $v$. It is easy to show with the same proof as in the
non-symmetric case that the extreme rays of the cone $(\mathcal{P}_{n,2d}^{S})^*$ are precisely the point evaluations $\ell_v$ (see \cite[Chapter 4]{BPT} for the non-symmetric case). Therefore we need to identify extreme rays of $(\Sigma_{n,2d}^{S})^*$ which are not point evaluations. We now examine the case of degree $4$ in detail, and give an explicit construction of an element of $(\Sigma_{n,4}^{S})^*$, which does not belong to $(\mathcal{P}_{n,2d}^{S})^*$.

\section{Symmetric quartic sums of squares}\label{se:quartic}
We now look at the decomposition of $H_{n,2}$ as an $\mathcal{S}_n$-module in order to apply Theorem \ref{THM Decomp} and characterize all symmetric sums of squares of degree $4$.

\begin{thm}\label{thm:decom}
Let $f^{(n)}\in H_{n,4}$ be symmetric and $n\geq 4$. If $f^{(n)}$
is a sum of squares then it can be written in the form
\begin{eqnarray*}
f^{(n)}&=&\alpha_{11}p_{(1^4)}+2\alpha_{12}p_{(2,1^2)}+\alpha_{22}p_{(2^2)}\\
&+&\beta_{11}\left(p_{(2,1^2)}-p_{(1^4)}\right)+2\beta_{12}\left(p_{(3,1)}-p_{(2,1^2)}\right)+\beta_{22}\left(p_{(4)}-p_{(2^2)}\right)\\
&+&\gamma\left(\frac{1}{2}
p_{(1^4)}-p_{(2,1^2)}+\frac{n^2-3n+3}{2n^2}p_{(2^2)}+\frac{2n-2}{n^2}
p_{(31)}+\frac{1-n}{2n^2} p_{(4)}\right)
\end{eqnarray*}
such that $\gamma\geq 0$ and the matrices $\begin{pmatrix}
\alpha_{11}&\alpha_{12}\\
\alpha_{12}&\alpha_{22}
\end{pmatrix}
$ and $\begin{pmatrix}
\beta_{11}&\beta_{12}\\
\beta_{12}&\beta_{22}
\end{pmatrix}$ are positive semidefinite.
\end{thm}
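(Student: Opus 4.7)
The plan is to apply Theorem \ref{THM Decomp} with $T = H_{n,2}$, the degree-$2$ part of the polynomial ring. If $f^{(n)}\in H_{n,4}^S$ is a sum of squares, the underlying squares lie in $H_{n,2}$, so the decomposition is governed by the isotypic decomposition of $H_{n,2}$ as an $\mathcal{S}_n$-module.

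First I would decompose $H_{n,2}$ into irreducibles. The subspace spanned by $\{x_i^2\}$ is a copy of the permutation module $M^{(n-1,1)}$, and the subspace spanned by $\{x_ix_j : i<j\}$ is a copy of $M^{(n-2,2)}$. Applying Young's rule (Proposition \ref{thm:youngrule}) and adding, for $n\geq 4$ we obtain
$$H_{n,2} \;=\; 2\,S^{(n)}\,\oplus\, 2\,S^{(n-1,1)}\,\oplus\, S^{(n-2,2)}.$$
Accordingly, Theorem \ref{THM Decomp} decomposes a symmetric sum of squares in $H_{n,4}^S$ into a $2\times 2$ PSD block for the trivial isotypic component, a $2\times 2$ PSD block for $S^{(n-1,1)}$, and a $1\times 1$ PSD block (a single nonnegative scalar $\gamma$) for $S^{(n-2,2)}$.

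Next I would choose a symmetry basis for each isotypic component and form the corresponding symmetrized-product matrices. For the trivial component, take $\{p_1^2,p_2\}$; since these are already symmetric, the symmetrized products are $p_{(1^4)}$, $p_{(2,1^2)}$, $p_{(2^2)}$, reproducing the $\alpha$-block. For $S^{(n-1,1)}$, take the symmetry basis $\{(x_1-x_2)p_1,\, x_1^2-x_2^2\}$, which are the vectors $h_{1,(1)}$ and $h_{2,\emptyset}$ of Section \ref{SEC Symsos}; equation \eqref{EQN Symh} then immediately yields the three symmetrized products $p_{(2,1^2)}-p_{(1^4)}$, $p_{(3,1)}-p_{(2,1^2)}$ and $p_{(4)}-p_{(2^2)}$, producing the $\beta$-block. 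For $S^{(n-2,2)}$, since the multiplicity is $1$, any single representative of the isotypic component suffices; the natural choice is the Specht polynomial $sp:=(x_1-x_3)(x_2-x_4)$, and the contribution is $\gamma\cdot\Sym_n(sp^2)$ with $\gamma\geq 0$.

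The main obstacle is the explicit computation of $\Sym_n((x_1-x_3)(x_2-x_4))^2$ and its expansion in the basis $\{p_{(1^4)},p_{(2,1^2)},p_{(2^2)},p_{(3,1)},p_{(4)}\}$. Expanding $sp = x_1x_2-x_1x_4-x_2x_3+x_3x_4$ and squaring yields a polynomial supported on monomials of the types $x_i^2x_j^2$, $x_i^2x_jx_k$ and $x_ix_jx_kx_l$ with specific signed multiplicities; symmetrizing each such monomial under $\mathcal{S}_n$ and regrouping by shape produces a $\Q(n)$-linear combination of the $p_\lambda^{(n)}$. Matching coefficients reproduces the stated expression $\tfrac12p_{(1^4)}-p_{(2,1^2)}+\tfrac{n^2-3n+3}{2n^2}p_{(2^2)}+\tfrac{2n-2}{n^2}p_{(3,1)}+\tfrac{1-n}{2n^2}p_{(4)}$ up to a positive constant absorbed into $\gamma$; the $n$-dependence arises because each $p_\lambda^{(n)}$ carries a prefactor $n^{-\ell(\lambda)}$ while the size of each $\mathcal{S}_n$-orbit of a monomial appearing in $sp^2$ is itself a polynomial in $n$. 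Once this bookkeeping identity is verified, the decomposition with positive semidefinite blocks $(\alpha_{ij})$, $(\beta_{ij})$ and $\gamma\geq 0$ follows directly from Theorem \ref{THM Decomp}.
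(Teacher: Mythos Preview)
Your proposal is correct and follows essentially the same approach as the paper: decompose $H_{n,2}$ as an $\mathcal{S}_n$-module, pick a symmetry basis in each isotypic component, compute the symmetrized products, and invoke the symmetry-adapted sums-of-squares theorem. The only cosmetic differences are that the paper reaches the multiplicities via the harmonic polynomial framework of Section~4.3 rather than Young's rule, and it uses the representative $(x_1-x_2)(x_3-x_4)$ instead of your $(x_1-x_3)(x_2-x_4)$ for $S^{(n-2,2)}$; since these lie in the same $\mathcal{S}_n$-orbit their symmetrized squares coincide, so this changes nothing.
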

\begin{proof}
The statement follows directly from the arguments presented in Subsection \ref{sub:fixeddegree}.
Following Theorem \ref{thm:GP3} we get that  $f^{(n)}$ has a decomposition in the form
$$f^{(n)}=B^{(n)}+\langle B^{(n-1,1)},Q_n^{(n-1,1)}\rangle + B^{(n-2,2)} \cdot  Q_n^{(n-2,2)},$$
where $B^{(n)}$ is a sum of symmetric squares, $B^{(n-1,1)}$ is a $2\times 2$ sum of symmetric squares matrix polynomial and due to the degree restrictions $B^{(n-2,2)}$ is a non-negative scalar.
It remains to calculate the  matrices $Q_n^{(n-1,1)}$ and $Q_n^{(n-2,2)}$ appearing in  the statement decomposition. These are defined as the symmetrization of the pairwise products of  those Specht polynomials which generate the corresponding Specht modules in degree 2.  
In degree 2 the  Specht polynomials $X_n-X_1$ and $X_n^2-X_1^2$ generate two disjunct irreducible $\mathcal{S}_n$ modules isomorphic to $S^{(n-1,1)}$ part and the Specht polynomial $(X_{n-1}-X_1)(X_{n}-X_2)$ generates a module isomorphic to  $S^{(n-2,2)}$. Thus we have:
$$
\begin{aligned}
Q_n^{(n-1,1)}&=\begin{pmatrix} \Sym_n ((X_n-X_1)^2)& \Sym_n((X_n-X_1)(X_n^2-X_1^2)\\
                                                         \Sym_n((X_n-X_1)(X_n^2-X_1^2))&\Sym_n((X_n^2-X_1^2)^2)
                                                         \end{pmatrix}\\
                                                         Q_n^{(n-2,2)}&=\Sym_n \left((X_{n-1}-X_1)^2(X_n-X_2)^2 \right).                                      
\end{aligned}
$$

Then the symmetrisation can be calculated quite directly, since ever of the products is only involves at most $4$ variables. These calculations then yield
\begin{equation}\label{EQN SYM1}
\begin{aligned}
Q_n^{(n-1,1)}&=
                                                         \frac{2n}{n-1}\cdot \begin{pmatrix} p_{(2)}-p_{(1^2)}& p_{(3)}-p_{(2,1)}\\
                                                                                                                                      p_{(3)}-p_{(2,1)}& p_{(4)}-p_{(2^2)}\end{pmatrix},\\
                                                         Q_n^{(n-2,2)}&=  
                                                          \frac{8n^3}{n^3-6n^2+11n-6}
                                                          \left(\frac{1}{2}
p_{(1^4)}-p_{(2,1^2)}+\frac{n^2-3n+3}{2n^2}p_{(2^2)}+\frac{2n-2}{n^2}p_{(3,1)}+\frac{1-n}{2n^2} p_{(4)}\right),                                             
\end{aligned}
\end{equation}
 which gives exactly the statement in the Theorem.
\end{proof}

\subsection{The boundary of $\Sigma_{n,4}^{S}$}

We now apply Proposition \ref{COR HYPER} to the case of degree $4$ and examine the possible kernels of an extreme ray of $(\Sigma^S_{n,4})^*$ which does not come from a point evaluation.
\begin{lemma}\label{le:Kern}
Suppose a linear functional $\ell$ spans an extreme ray of
$(\Sigma_{n,4}^{\mathcal{S}})^{*}$ which is not an extreme ray of $(\mathcal{P}_{n,4}^S)^*$. Let $Q$ be quadratic form corresponding to $\ell$. Then 
\begin{equation*}
\Kern Q \simeq \mathcal{S}^{(n)}\oplus \mathcal{S}^{(n-1,1)}, 
\end{equation*}
or $$\ell \left(\sum_{\lambda \vdash 4} c_\lambda p_{\lambda}\right)=c_{(4)}+c_{(2^2)}$$
and $n$ is odd.
\end{lemma}
\begin{proof}
Since $Q$ is an $\mathcal{S}_n$-invariant quadratic form, its kernel $\Kern
Q\subseteq H_{n,2}$ is an $\mathcal{S}_n$-module. It follows from the arguments in the proof of Theorem \ref{thm:decom} that  $\Kern
Q$ decomposes as $$\Kern Q\simeq \alpha\cdot \mathcal{S}^{(n)}\bigoplus \beta\cdot
\mathcal{S}^{(n-1,1)}\bigoplus \gamma\cdot \mathcal{S}^{(n-2,2)},$$ where $\alpha,\beta\in\{0,1,2\}$ and
$\gamma\in\{0,1\}$.
We now examine the 
possible combinations of $\alpha,\beta$ and $\gamma$. 

As above let $W$ denote the kernel of $Q$.
We first observe that $\alpha=2$ is not possible: if $\alpha=2$ then we
have $p_2\in W$ which implies $p_2^2\in W^{<2>}_S$, which is
a contradiction since $p_2^2$ is not on the boundary of $\Sigma_{n,4}^{\mathcal{S}_n}$. 

By Proposition \ref{COR HYPER} the kernel $W$ of $Q$ must be maximal. Let $w \in \R^n$ be the all $1$ vector: $w=(1,\dots,1)$. We now observe that $\alpha=0$ is also not possible: if $\alpha=0$ then all forms in the kernel $W$ of $Q$ are $0$ at $w$. Therefore $\ker Q \subseteq \ker Q_{\ell_w}$ and by Proposition \ref{COR HYPER} we have $Q=\lambda Q_{\ell_w}$, which is a contradiction, since $Q$ does not correspond to point evaluation. Thus we must have $\alpha=1$.

Since we have $\dim H_{n,4}^{\mathcal{S}}=5$ from Corollary \ref{COR HYPER} we see that $\dim W^{<2>}=4$. This excludes the case $\beta=0$, since even with $\alpha=1$ and $\gamma=1$ the dimension of $W^{<2>}$ is at most $3$. Now suppose that $\beta=2$, i.e. the $\mathcal{S}_n$-module generated by $(X_1-X_2)p_{1}$ and $X_1^2-X_2^2$ lies in $W$ as well as a polynomial $q=a p_1^2+b p_2$.
We consider the symmetrizations of the five pairwise products and express these in the basis $\{p_{(4)},p_{(3,1)}, p_{(2^2)},p_{(2,1^2)}, p_{(1^4)}\}$.
  
Now the condition  $\dim W^{<2>}=4$ implies that these 5 products cannot be linearly independent and an explicit calculation of the determinant of the corresponding matrix $M$ yields  $\det M=b(a+b)$. We now examine the possible roots of this determinant. In the case when  $a=-b$ all polynomials in $W$ (even if $\gamma=1$) will be zero at $(1,\dots,1)$, which is excluded. Therefore the only possible case is $b=0$. In that case, by calculating the kernel of $M$ we see that the unique (up to  a constant multiple) linear functional $\ell$ vanishing on $W^{<2>}$ is given by
$$\ell \left(\sum_{\lambda \vdash 4} c_\lambda p_{\lambda}\right)=c_{(4)}+c_{(2^2)}.$$

We observe using \eqref{EQN SYM1} that we must have $\gamma=0$ since $\ell \left(\Sym_n (X_1-X_2)^2(X_3-X_4)^2\right)>0$ for $n \geq 4$. Now suppose that $n$ is even and let $w\in \R^n$ be given by $w=(1,\dots,1,-1,\dots,-1)$ where $1$ and $-1$ occur $n/2$ times each. It is easy to verify that for all $f \in W$ we have $f(w)=0$. Therefore it follows that $W \subseteq \ker Q_{\ell_w}$, which is a contradiction, since $W$ is a kernel of an extreme ray which does not come from point evaluation.

When $n$ is odd the forms in $W$ have no common zeroes and therefore $\ell$ is not a positive combination of point evaluations. It is not hard to verify that $\ell$ is non-negative on squares and the kernel $W$ is maximal. Therefore by Proposition \ref{COR HYPER} we know that $\ell$ spans an extreme ray of $(\Sigma^S_{n,2d})^*$

Finally we need to deal with the case $\alpha=\beta=\gamma=1$. Suppose that the $\mathcal{S}_n$-module $W$ is generated  by  three
polynomials:  $$q_1:=a p_{1}^{2}+b
p_{2}, \quad q_2:=c(X_1-X_2)p_{1}+d(X_1^2-X_2^2),\quad q_3=(X_1-X_2)(X_3-X_4).$$
Again we consider the symmetrizations of the five pairwise products and represent these in a matrix $M$ .
Explicit calculations now show that $$\det M=-(a+b)(a d^2n^2-4ad^2n+4ad^2+bd^2n^2+4bcd n+bc^2n-4bcd-bc^2).$$
Since we have $\alpha=\beta=\gamma=1$ we must have $\operatorname{rank} M=4$ since the rows of $M$ generate $W^{<2>}$. Again we cannot have $a=-b$, and thus we must have: \begin{equation}\label{EQN mess}a d^2n^2-4ad^2n+4ad^2+bd^2n^2+4bcd n+bc^2n-4bcd-bc^2=0 \end{equation}
Therefore there exists a unique linear functional $\ell$, which vanishes on $W^{<2>}$ and $\ell$ comes from the kernel of $M$. 

Let $w\in \R^n$ be a point with coordinates $w=(s,\dots,s,t)$ with $s,t \in \R$, such that they satisfy:
$$c n(s+t)+d((n-1)s+t)=0.$$
We see that $q_3(w)=0$ and from the above equation it also follows that for all $f$ in the $\mathcal{S}_n$-module generated by $q_2$ we have $f(w)=0$. Direct calculation shows that \eqref{EQN mess} also implies that $q_1(w)=0$. Thus we have $W\subseteq Q_{\ell_w}$, which is a contradiction by Proposition \ref{COR HYPER}, since $W$ is a kernel of an extreme ray which does not come from point evaluation. We remark that it is possible to show that the functional $\ell$ vanishing on $W^{<2>}$ and giving rise to $W$ is in fact a multiple of $\ell_w$, but this is not necessary for us to finish the proof.
\end{proof}
The above description allows us to explicitly characterize
degree 4 symmetric sums of squares that are positive and on the
boundary of $\Sigma_{n,4}^S$.

\begin{thm}\label{thm:boundary}
Let $n\geq 4$ and $f^{(n)}\in H_{n,4}$ be symmetric and positive and on the boundary of
$\Sigma_{4,n}^{S}$. Then
\begin{enumerate}
 \item either $f^{(n)}$ can be written as
$$f^{(n)}=a^2p_{(4)}^{(n)}+2abp_{(31)}^{(n)}+(c^2-a^2)p_{2^2}^{(n)}+(2cd+b^2-2ab)p_{(2,1^2)}^{(n)}+(d^2-b^2)p_{(1^4)}^{(n)},$$
with non-zero coefficients $a,b,c,d\in\R\backslash\{0\}$ which
additionally satisfy

\begin{equation}\label{eq:coef}
\begin{tabular}{llll}
0&$\leq\frac{a(c-d)+b(d+c)}{ac}$&0&$\leq\frac{a (c + d) (b c - a d)}{ac}$\\\\
0&$\leq-\frac{c+d}{a^2c^2}(a^2 (c - d) + b (a c + b c))$&0&$\leq-\frac{c+d}{a^2c^2}((a b c + b^2 c - a^2 d) a^2 c - (-a^2 d)^2)$\\\\
0&\multicolumn{3}{l}{$\leq \left( c+d \right)  \left(\left( c{a}^{2}+cab \right) {n}^{2}+ \left( {b}^{2}c-3\,cab+3\,{a}^{2}d \right) n-{b}^{2}c+3\,cab-3\,{a}^{2}d\right)$}\\\\
\end{tabular}
\end{equation}
\item or if $n$ is odd then  $f^{(n)}$ may have the form $$f^{(n)}=a^2p_{(1^4)}+b_{11}\left(p_{(2,1^2)}-p_{(1^4)}\right)+2b_{12}\left(p_{(3,1)}-p_{(2,1^2)}\right)+b_{22}\left(p_{(4)}-p_{(2^2)}\right),$$
with  coefficients $a,b_{11}, b_{12}, b_{22}\in\R$ which
additionally satisfy
\begin{equation*}\label{eq:coef2}
 a\neq 0,\, b_{11}+b_{22}\geq 0,\, b_{11}b_{22}-b_{12}^2\geq 0 \end{equation*}
\end{enumerate}
\end{thm}
\begin{proof}
Suppose that $f^{(n)}$ is a strictly positive form on the boundary of
$\Sigma_{4,n}^{S}$. Then
there exists a non-trivial functional $\l$ spanning an extreme ray of
the dual cone $(\Sigma_{4,n}^{S})^{*}$ such that $\ell(f^{(n)})=0$. Let
$W_{\l}\subset
H_{n,2}$ denote the kernel of $Q_\ell$. In view of Lemma \ref{le:Kern}
we see that there are two 
possible situations that we need to take into consideration.

$(1)$ We first assume that 
\begin{equation}\label{eq:kern}W_\ell\simeq \mathcal{S}^{(n)}\oplus
\mathcal{S}^{(n-1,1)}.\end{equation}
In view of \eqref{eq:kern} we may assume that the $\mathcal{S}_n$-module $W_\ell$ is generated  by  two
polynomials:  $$q_1:=(c p_{1}^{2}+d
p_{2}) \text{ and } q_2:=\frac{n-1}{2n} (a  (X_1^2-X_2^2)+b(X_1-X_2)p_{1}),$$
where $a,b,c,d\in\R$ are chosen such that $(0,0)\neq (a,b)$ and
$(0,0)\neq (c,d)$.

Let $q\in H_{n,2}$. By Proposition \ref{COR HYPER} we have$$ q\in
W_{\l}\,\,\text{ if and only if the $\mathcal{S}_n $-linear map}\,\,
p\mapsto \ell(pq)
\,\,\text{ is the zero map for on } H_{n,2}.$$
The dimension of the vector space of $\mathcal{S}_n$-invariant quadratic maps from $H_{n,2}$ to
$\R$ is 5. However, since $q\in W_{\ell}$ Schur's lemma implies
$\ell(q\cdot r)=0$ for all $r$ in the isotypic  component
of the type $(n-2,2)$.
Let $y_{\lambda}=\ell(p_{\lambda})$.  Using explicit calculations we find that the
coefficients $y_{\lambda}$ are characterized by the
following system of four linear equations:
\begin{align*}\label{eq:system}
 0=&\ell(\Sym(q_1\cdot p_{2}))=c \cdot y_{(2^2)}+d\cdot y_{(2,1^2)}\\
 0=&\ell(\Sym(q_1\cdot p_{1}^2))=c\cdot y_{(2,1^2)} +d\cdot  y_{(1^4)}\\
 0=&\ell(\Sym(q_2\cdot(X_1^2-X_2^2)))=a \cdot y_{(4)}-a\cdot
y_{(2^2)}+b\cdot y_{(3,1)}-b\cdot y_{(2,1^2)}\\
 0=&\ell(\Sym(q_2\cdot(X_1-X_2)p_{1}))=a\cdot y_{(3,1)}-a\cdot
y_{(2,1^2)}+b\cdot y_{(2,1^2)}- b\cdot y_{(1^4)}
\end{align*}

Since in addition we want that the form $\l\in(\Sigma_{n,d}^S)^{*}$ we
must also take into account that the corresponding quadratic form
$Q_\ell$ has to be positive semidefinite.  By Lemma \ref{le:psd}  this
is equivalent to checking that
each of the two matrices
$$M_{(n)}:=\begin{pmatrix}
                    y_{(2^2)}&y_{(2,1^2)}\\
                    y_{(2,1^2)}&y_{(1^4)}
                    \end{pmatrix},
                    M_{(n-1,1)}:=\begin{pmatrix}
                   y_{(4)}-y_{(2^2)}&y_{(3,1)}-y_{(2,1^2)}\\
                    y_{(3,1)}-y_{(2,1^2)}&y_{(2,1^2)}-y_{(1^4)}
                    \end{pmatrix}\text{ is positive semidefinite, }$$
                    $$\text{ and 
}M_{(n-2,2)}:=\frac{n^2}{2}y_{(1^4)}-n^2y_{(21^2)}+(2n-2)y_{(31)}+\frac{1}{2}(n^2-3n+3)y_{(2^2)}+\frac{1-n}{2}
y_{(4)}\geq 0.$$

Now assuming $a=0$ we find that, either $b=0$ which is excluded, or any
solution of the above linear system will have 
$$y_{(2^2)}=
y_{(3,1)}= y_{(2,1^2)} = y_{(1^4)}.$$ 
By substituting this into $M_{(n-2,2)}$ we find that $$\frac{1-n}{2}(y_{(4)}-y_{(2)^2}) \geq 0,$$
while from $M_{(n-1,1)}$ we have that $y_{(4)}-y_{(2)^2} \geq 0$. It follows that 
$$y_{(3,1)}= y_{(2,1^2)} = y_{(1^4)}=y_{(4)}=y_{(2)^2}.$$
But then we find that $\ell$ is proportional to the functional that simply evaluates at the point $(1,1,1,\ldots,1)$, which is a contradiction since $f^{(n)}$ is strictly positive. Thus $a\neq 0$.

Now suppose that $c=0$. Then we find that $$y_{(3,1)}=y_{(1^4)}=y_{(2,1^2)}=0 \quad \text{and} \quad a(y_{(4)}-y_{(2^2)})=0.$$

Since $a\neq 0$ we find that the linear functional $\ell$ is given by $$\ell \left(\sum_{\lambda \vdash 4} c_{\lambda}p_\lambda \right)=c_{(4)}-c_{(2^2)}.$$ By Lemma \ref{le:Kern} we must have $n$ odd in order for $\ell$ not to be a point evaluation and this lands us in case $(2)$ discussed below.

Meanwhile with $a,c \neq 0$ the solution of the linear system (up to a common multiple) is given by
$$ y_{(4)}=\frac{-b^2cd-b^2c^2+a^2d^2}{c^2a^2},\,
y_{(2^2)}=-\frac{da-db-bc}{ca},\, y_{(3,1)}= \frac{d^2}{c^2},\,
y_{(2,1^2)}=-\frac{d}{c},\, y_{(1^4)}=1,$$
which then yields the conditions in \eqref{eq:coef}.
 
$(2)$ If $n$ is odd we know from Lemma 6.7 that there is one additional case: $f^{(n)}$ is a sum of the square $(ap_{(11)})^2$, and a sum of squares of elements from the isotypic component
of $H_{n,2}$ which corresponds to the representation $S^{(n-1,1)}$. Since $f^{(n)}$ is strictly positive, we must have $a\neq0$ (otherwise $f^{(n)}$ has a zero at $(1,\dots,1)$) and it also follows that the matrix 
$\begin{pmatrix}
b_{11}&b_{12}\\
b_{12}&b_{22}
\end{pmatrix}$
must be strictly positive definite.
Therefore we get the announced decomposition from Theorem \ref{thm:decom}.

 \end{proof}

Note that although the first symmetric counterexample by Choi and Lam in four
variables gives $\Sigma_{4,4}^S\subsetneq \mathcal{P}_{4,4}^S$ it does
not immediately imply that we have strict containment for all $n$. 

However, using our methods, one can 
produce a sequence of strictly positive symmetric quartics that
lie on the boundary of $\Sigma_{n,4}^{S}$ for all $n$ as a witness for
the strict inclusion. 

\begin{ex}\label{ex:nice}

For $n\geq 4$ consider family of  polynomials
$$f^{(n)}:=a^2p_{(4)}^{(n)}+2abp_{(31)}^{(n)}+(c^2-a^2)p_{(2^2)}^{(n)}+(2cd+b^2-2ab)p_{(2,1^2)}^{(n)}+(d^2-b^2)p_{(1^4)}^{(n)},$$
where we set $a=1,{ b=-\frac{13}{10}}, c=1$ and $d=-\frac{5}{4}$.
Further consider the linear functional $\l\in H_{n,4}^{*}$ with
\begin{equation*}\l\left(p_{(4)}^{(n)}\right)={\frac{397}{200}},~
\l\left(p_{(2^2)}^{(n)}\right)={\frac{63}{40}},
~\l\left(p_{(3,1)}^{(n)}\right)={\frac{25}{16}},
~\l\left(p_{(2,1^2)}^{(n)}\right)=\frac{5}{4},~
\l\left(p_{(1^4)}^{(n)}\right)=1.\end{equation*} Then we have
$\ell(f^{(n)})=0$. In addition the corresponding matrices become
$$M_{(n)}:=\left( \begin {array}{cc} {\frac {63}{40}}&\frac{5}{4}\\ \noalign{\medskip}\frac{5}{
4}&1\end {array} \right) ,M_{(n-1,1)}:=  \left(\begin {array}{cc} {\frac {41}{100}}&{\frac {5}{16}}
\\ \noalign{\medskip}{\frac {5}{16}}&\frac{1}{4}\end {array} \right), \text{ and }
M_{(n-2,2)}:={\frac {3}{80}}\,{n}^{2}+{\frac {21}{80}}\,n-{\frac {21}{80}}.$$ These matrices are all positive semidefinite for $n\geq 4$ and
therefore we have $\l\in\left(\Sigma_{n,4}^{S}\right)^{*}$. This
implies that  $f^{(n)}\in\partial\Sigma_{n,4}^{S}$.

\noindent Now we argue that for any $n\in \N$ the forms $f^{(n)}$ are
strictly positive. By Corollary \ref{cor:degree} it follows that
$f^{(n)}$ has a zero, if and only if there exists
$k\in\{\frac{1}{n},\ldots,\frac{n-1}{n}\}$ such that the bivariate
form
\begin{align*}h_k(x,y)=\Phi_f(k,1-k,x,y)&=k{x}^{4}+(1-k)y^4-{\frac {13}{5}}\, \left( k{x}^{3}+(1-k)y^3 \right)  \left( kx+
(1-k)y \right)\\& +{\frac {179}{100}}\, \left( k{x}^{2}+(1-k)y^2 \right)  \left( 
kx+(1-k)y \right) ^{2}-{\frac {51}{400}}\, \left( kx+(1-k)y \right) ^{4}\end{align*} has a real projective zero $(x,y)$. 

Since $f^{(n)}$ is a sum of squares and therefore
non-negative we also know that $h_k(x,y)$ is non-negative for all $k \in \{\frac{1}{n},\ldots,\frac{n-1}{n}\}$.
Therefore the real projective roots of  $h_k(x,y)$ must have even multiplicity.
This implies that $h_k(x,y)$ has a real root  only if its discriminant
$\delta(h_k)$ - viewed as polynomial in the parameter $k$ - has a root
in the admissible range for $k$.
 We calculate \begin{equation*}\delta(h_k):=-{10^{-8}}\, \left( 10000-37399\,k+37399\,{k}^{2}
 \right)  \left( 149\,{k}^{2}-149\,k+25 \right) ^{2} \left( k-1
 \right) ^{3}{k}^{3}
\end{equation*}
We see that $\delta(h_k)$ is zero only for $k\in\{0,1,
\frac{1}{2}\pm{\frac {7}{298}}\,\sqrt {149}, \frac{1}{2}\pm{\frac {51}{74798}}\,i\sqrt {37399}\}$. 
Thus we see that for all natural numbers $n$ there is no
$k\in\{\frac{1}{n},\ldots,\frac{n-1}{n}\}$ such that $h_k(x,y)$ has a
real projective zero. Therefore we can conclude that for any
$n\in \N$ the form $f^{(n)}$ will be strictly positive. 
\end{ex}
From the above example the following characterization, which recently had been independently given by Goel, Kuhlmann and Reznick  \cite{gkr}  is an  immediate consequence.
\begin{thm}\label{thm:1888}
The inclusion $\Sigma_{n,2d}^S\subset \mathcal{P}_{n,2d}^S$ is strict except in the cases of symmetric bivariate form, or symmetric quadratic forms, or symmetric ternary quartics.
\end{thm}
\begin{proof}
The well-known Robinson form $$X_1^6 + X_2^6 + X_3^6 - X_1^4X_2^2-X_1^2X_2^4 - X_1^4X_3^2- X_2^4X_3^2 - X_1^2X_3^4 -X_2^2X_3^4 + 3X_1^2X_2^2X_3^2$$ is a non-negative form which is not a sum of squares.
Furthermore, for the case $2d=4$ and $n\geq 4$ Example \ref{ex:nice} above  gives for every $n$ a positive polynomial $f^{(n)}$ which lies on the boundary of $\Sigma_{n,4}^S$ and therefore guarantees the existence of $h_{n,4}\in P_{n,2d}^S$ which is positive definite but not a sum of squares. The result now follows by observing that  for any  positive definite form $h\in H_{n,2d}$ that is not a sum of squares, the form $(X_1+\ldots+ X_n)^2 h\in H_{n,2d+2}$ is also positive definite and not a sum of squares. Indeed, suppose that  $(X_1+\ldots+ X_n)^2 h=f_1^2+\ldots+f_m^2$ then $(X_1+\ldots+ X_n)^2$ will divide $f_i^2$ which yields that $h$ is a sum of squares. 
\end{proof}

\section{Asymptotic  behavior }\label{seq:limit}
In this section we  study the relationship of sums of squares and non-negative forms when the number of variables tends to infinity.

\subsection{Full dimensionality}

We now consider the power mean inequalities and their limits.  In order to talk about limits of our sequences of cones we use following notion of limit of a sequence of sets, which is due to Kuratowski \cite{kur} and we refer the reader to \cite{mos,sal} for details in the context of sequences of convex sets.
\begin{definition}\label{def:kur}
Let $\{K_n\}_{n\in N}$ be sequence of subsets of $\R^k$. Then a set $K\subset\R^k$ is called the limit of the sequence, denoted by $K=\lim_{n\rightarrow \infty} K_n$, if we have
$$\limsup_{n\rightarrow \infty} K_n\subset K\subset \liminf_{n\rightarrow \infty} K_n,$$ where
\begin{align*}
\liminf_{n\rightarrow \infty} K_n&=\{x\in \R^k\,:\, x=\lim_{n\in \N} X_n\, , X_n\in K_n\}\\
\\
\limsup_{n\rightarrow \infty} K_n&=\{x\in \R^k\,:\, x=\lim_{m\in M\subset \N} X_m\, , X_m\in K_m\}, \text{ for some infinite }  M\subset \N.\\
\end{align*}
\end{definition}
\begin{remark}
Note that the limit defined above is a closed set.
\end{remark}

It will be convenient for the proof Theorem \ref{THM LIMIT} to relate the power mean inequalities to the sequences formed by the Reynolds operator. Let $\mu=(\mu_1,\dots,\mu_r)$ be a partition of $2d$. Associate to $\mu$ the monomial $X_1^{\mu_1}\cdots X_{r}^{\mu_r}$ and define symmetric form $m_\mu^{(n)}$ by:
$$m_\mu^{(n)}=\Sym_n (X_1^{\mu_1}\cdots X_{r}^{\mu_r}).$$
This is the monomial mean basis of $H_{n,2d}^S$. We observe that with this choice of basis $H_{n,2d}^S$ the transition maps $\varphi_{m,n}$ are given by the identity matrices. Since the stabilizer of the monomial $X_1^{\mu_1}\cdots X_{r}^{\mu_r}$ is isomorphic to $\mathcal{S}_{s_1}\times \ldots\times \mathcal{S}_{s_t}\times \mathcal{S}_{n-r}$ it follows that
$$m_\mu^{(n)}=\frac{s_1!\cdots s_k!(n-r)!}{n!}\bar{m}_{\mu}^{(n)}=\binom{n}{s_1\dots s_k}^{-1}\bar{m}_{\mu}^{(n)},$$
where $\bar{m}_{\mu}^{(n)}$ is the monomial symmetric polynomial associated to $\mu$.

\begin{prop}\label{prop:limmon}
Consider the sequences $\Sigma_{n,2d}^{\varphi}$ and $\mathcal{P}^{\varphi}_{n,2d}$ embedded into $\R^{\pi(2d)}$ via the monomial mean basis. Then the limits of the resulting sequences of convex cones in $\R^{\pi(2d)}$ have limits, which we will denote by $\mathfrak{S}_{2d}^{\varphi}$ and $\mathfrak{P}_{2d}^{\varphi}$. Both of these limits are closed and full-dimensional.
\end{prop}
\begin{proof}
Since we have $\varphi_{n,n+1}(\Sigma_{n,2d})\subseteq \Sigma_{n+1,2d}^{S}$ and $\varphi_{n,n+1}(P_{n,2d})\subseteq \mathcal{P}^{S}_{n+1,2d}$ the resulting sequences of cones are increasing. Thus by \cite{sal}[Proposition 1] the limits exist and are given by 
\begin{eqnarray*}
\mathfrak{S}_{2d}^{\varphi} &=&\overline{\left\{f^{(n)}:=\sum_{\lambda\vdash 2d} c_\lambda m_{\lambda}^{(n)} \text{ with } f^{(m)}\in \Sigma_{m,2d}, \text{for one } m\in \N\right\}}\\
\mathfrak{P}_{2d}^{\varphi}&=&\overline{\left\{f^{(n)}:=\sum_{\lambda\vdash 2d} c_\lambda m_{\lambda}^{(n)} \text{ with } f^{(m)}\in \mathcal{P}_{m,2d}, \text{for one } m\in \N\right\}}
\end{eqnarray*}
Clearly, both cones are full dimensional.
\end{proof}

In order to establish the result for the power mean basis, we first have to study the relationship between these two bases:
\begin{prop}\label{prop:transition}
Let $M_n$ be the matrix converting between the monomial mean and power mean basis of $H_{n,2d}^S$. Then $M_n$ converges entry-wise to a full rank matrix $M^*$ as $n$ grows to infinity.
\end{prop}
\begin{proof}
The transition matrix between power sum symmetric polynomials and monomial symmetric polynomials is well understood \cite{Transition}. Converting to our mean bases we have the following: let $\nu:=(\nu_1,\ldots,\nu_l)\vdash 2d$, $\mu=(\mu_1,\dots,\mu_r) \vdash 2d$, then: 
$$m_\mu^{(n)}=\sum_{\nu\vdash 2d}(-1)^{r-l} \frac{(n-r)!|\mathcal{BL}(\mu)^{\nu}|}{n!}n^{l}p_\nu^{(n)},$$
where $|\mathcal{BL}(\mu)^{\nu}|$ is the number of $\mu$-brick permutations of shape $\nu$ \cite{Transition}. We observe that the unique highest order of growth in $n$ for a coefficient of $p_{\nu}^{(n)}$ occurs when the number of parts of $\nu$ is maximized. The unique $\nu$ with the largest number of parts and nonzero $|\mathcal{BL}(\mu)^{\nu}|$ is $\mu$. Thus we have $\nu=\mu$, $r=l$, and
$$|\mathcal{BL}(\nu)^{\nu}|=\nu_1!\cdots \nu_l!  \quad \text{and} \quad \lim_{n \rightarrow \infty} \frac{n^r(n-r)!}{n!}=1.$$
Therefore we see that asymptotically $$m_{\mu}^{(n)}=p_{\mu}^{(n)}+\sum_{\nu \vdash 2d, \nu \neq \mu} a_{\mu,\nu}(n)p_{\nu}^{(n)},$$
where the coefficients $a_{\mu,\nu}(n)$ tend to $0$ as $n \rightarrow \infty$. The Proposition now follows.
\end{proof}
 Now with these preparations the  proof of Theorem \ref{THM LIMIT} will be immediate after the following  two Lemmata.
\begin{lemma}\label{lemma:limits}
Let $V$ be a finite dimensional vector space. Let $A_i$ be a sequence of subsets of $V$ converging to $A$. Let $M_i$ be a sequence of linear maps from $V$ to itself converging to identity. Let $B_i=M_i(A_i)$. Then
$$\operatorname  A \subseteq \liminf_{i \rightarrow \infty} B_i \hspace{.5cm} \text{and} \hspace{.5cm} \limsup_{i \rightarrow \infty} B_i \subseteq A,$$ so $A$ is the limit of $B_i$.
\end{lemma}

\begin{proof}
For the first inclusion, let $a\in  A$. Since $A$ is the limit of $A_i$ there exists $N$ such that for all $i\geq N$ $a$ is contained in $A_i$. Let $b_i=M_i a$. Then $b_i\in B_i$  for all $i\geq N$ and  moreover, since the linear maps $M_i$ converge to identity we have that $b_i$ converges to $a$. This in turn implies that $a\in \liminf_{i \rightarrow \infty} B_i$

\indent For the second inclusion, we remark that $(\limsup_{i \rightarrow \infty} B_i)^c= \liminf_{i \rightarrow \infty} B_i^c$. Hence,  one can argue in an analogous way by considering the complement of $A$.
\end{proof}

From the above lemma we can easily obtain the following generalization, which shows that the conclusions also hold if the limit of the linear maps $M_i$ is any full-rank map. 

\begin{lemma}\label{lemma:limits2}
Let $V$ be a finite dimensional vector space. Let $A_i$ be a sequence of subsets of $V$ converging to $A$. Let $M_i$ be a sequence of linear maps from $V$ to itself, converging to a full-rank linear map $M$. Let $B_i=M_i(A_i)$. Then

$$M(A) \subseteq \liminf_{i \rightarrow \infty} B_i \hspace{.5cm} \text{and} \hspace{.5cm} \limsup_{i \rightarrow \infty} B_i \subseteq M(A),$$
so $M(A)$ is the limit of $B_i$.
\end{lemma}
\begin{proof}
We can apply Lemma \ref{lemma:limits} to the sequence $C_i=M^{-1}M_i(A_i)$. Since $B_i=M(C_i)$, and $M$ is a full-rank linear map, the desired conclusions follow for the sequence $B_i$ as well.
\end{proof}
The existence of the  limits of sequences and their full dimensionality now can be established by translating from Proposition \ref{prop:limmon}.
\begin{proof}[Proof of Theorem \ref{THM LIMIT}]
We only give the proof for $\mathfrak{S}_{2d}$ since the statement for $\mathfrak{P}_{2d}$ follows in an analogous manner. 
We first observe that  the sequence $\Sigma_{n,2d}^\rho$ is semi nested it follows that $\liminf \Sigma_{n,2d}^\rho=\bigcap_{n\geq 2d} \Sigma_{n,2d}^\rho$. We now apply Lemma \ref{lemma:limits} to the sequence $\Sigma_{n,2d}^\rho$, with $A_i=\Sigma^{\varphi}_{n,2d}$ and $M_i$ transition maps between monomial mean and power mean bases. From Proposition \ref{prop:transition} we know that the maps $M_i$ converge to identity. Therefore, we see that $$\mathfrak{S}_{2d}\subseteq \bigcap_{n\geq 2d} \Sigma_{n,2d}^\rho\text{ and } \limsup \Sigma_{n,2d}^\rho \subseteq \mathfrak{S}_{2d}.$$ The theorem now follows, since the full-dimensionality is a direct consequence of Proposition \ref{prop:limmon}.
\end{proof}

\section{Symmetric mean inequalities of degree four}\label{se:main}
In this last section we  characterize quartic symmetric mean
inequalities that are valid for all values of $n$. Recall from 
Section 2  that  $\mathfrak{P}_4$ denotes the cone of all sequences
$\mathfrak{f}=(f^{(4)},f^{(5)},\ldots)$ of degree 4 power means that
are non-negative for all $n$ and  $\mathfrak{S}_4$ the cone of such
sequences that can be written as sums of squares.

In the case of quartic forms  the elements of $\mathfrak{P}_{4}$ can
be characterized  by a family of univariate polynomials as Theorem
\ref{thm:charpos} specializes to the following

\begin{prop}\label{le:ph1}
Let
$$\mathfrak{f}:=\sum_{\lambda \vdash
4}c_{\lambda}\mathfrak{p}_{\lambda}$$ be a linear combination
of quartic symmetric power means.
Then $\mathfrak{f}\in\mathfrak{P}_4$ if any only if for all $\alpha
\in[0,1]$ the bivariate form 
$$\Phi^{\alpha}_{\mathfrak{f}}(x,y)=\Phi_{\mathfrak{f}}(\alpha,1-\alpha,x,y):=\sum_{\lambda\vdash
 4}c_{\lambda}\Phi_{\lambda}(\alpha,1-\alpha,x,y)$$ is non-negative.
\end{prop}

Now we turn to the  characterization of the elements on the boundary
of $\mathfrak{P}_4$.
\begin{lemma}\label{le:rand}
Let $0\neq\mathfrak{f}\in\mathfrak{P}_4$. Then
$\mathfrak{f}$  is on the boundary $\partial\mathfrak{P}_4$ if and
only if  there exists  $\alpha\in(0,1)$ such that the bivariate form $\Phi^{\alpha}_{\mathfrak{f}}(x,y)$ has a double real root.
\end{lemma}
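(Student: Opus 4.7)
The proof splits naturally into the two implications. The reverse direction will follow from a direct perturbation by $\mathfrak{p}_{(2^2)}$, while the forward direction will use the characterization of $\mathfrak{P}_4$ from Proposition~\ref{le:ph1} together with a compactness argument, with the subtle issue being the behavior at the boundary $\alpha\in\{0,1\}$ of the simplex.

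For the reverse direction, I suppose $\alpha_0\in(0,1)$ and that the nonnegative quartic form $\Phi^{\alpha_0}_{\mathfrak{f}}(x,y)$ has a real double root $(x_0,y_0)\neq(0,0)$; note that any real root of a nonnegative quartic form is automatically of even multiplicity, so this amounts to the existence of a real zero off the origin. The idea is to push $\mathfrak{f}$ out of $\mathfrak{P}_4$ by subtracting a small multiple of $\mathfrak{p}_{(2^2)}$. Setting $\mathfrak{f}_\epsilon:=\mathfrak{f}-\epsilon\,\mathfrak{p}_{(2^2)}$ yields
$$\Phi^{\alpha_0}_{\mathfrak{f}_\epsilon}(x_0,y_0)\;=\;0-\epsilon\bigl(\alpha_0 x_0^2 + (1-\alpha_0)y_0^2\bigr)^2\;<\;0,$$
since $\alpha_0 x^2+(1-\alpha_0)y^2$ is a positive-definite quadratic form (as $\alpha_0\in(0,1)$) and $(x_0,y_0)\neq 0$. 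By Proposition~\ref{le:ph1} the sequence $\mathfrak{f}_\epsilon$ is not in $\mathfrak{P}_4$ for any $\epsilon>0$, and letting $\epsilon\to 0$ places $\mathfrak{f}$ on $\partial\mathfrak{P}_4$.

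For the forward direction, I pick $\mathfrak{f}_n\to\mathfrak{f}$ with $\mathfrak{f}_n\notin\mathfrak{P}_4$; by Proposition~\ref{le:ph1} each $\mathfrak{f}_n$ admits a witness $(\alpha_n,x_n,y_n)\in[0,1]\times S^1$ with $\Phi^{\alpha_n}_{\mathfrak{f}_n}(x_n,y_n)<0$. Using the rational-approximation trick from the proof of Theorem~\ref{thm:charpos} I can moreover arrange $\alpha_n\in(0,1)$. Extracting a convergent subsequence $(\alpha_n,x_n,y_n)\to(\alpha_0,x_0,y_0)\in[0,1]\times S^1$ and passing to the limit using $\mathfrak{f}\in\mathfrak{P}_4$ yields $\Phi^{\alpha_0}_{\mathfrak{f}}(x_0,y_0)=0$. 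When $\alpha_0\in(0,1)$ this is exactly the desired real double root. The hard part will be handling the endpoint case $\alpha_0\in\{0,1\}$: for $\alpha_0=0$ one has $\Phi^0_{\mathfrak{f}}(x,y)=\bigl(\sum_\lambda c_\lambda\bigr)y^4$, so $\Phi^0_{\mathfrak{f}}(x_0,y_0)=0$ forces either $y_0=0$ (a ``universal zero'' carrying no information about $\mathfrak{f}$) or $\sum_\lambda c_\lambda=0$. The second alternative is in fact harmless, because $\Phi_\lambda(\alpha,1-\alpha,1,1)=1$ for every $\lambda\vdash 4$ and every $\alpha$, so $\Phi^\alpha_{\mathfrak{f}}(1,1)=\sum_\lambda c_\lambda=0$ for every $\alpha\in(0,1)$, yielding the required real double root along the diagonal. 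The genuine obstacle is the universal-zero subcase, which I would treat either by refining the selection of witnesses $(\alpha_n,x_n,y_n)$ to stay uniformly away from the degenerate points $\{(0,\pm 1,0),(1,0,\pm 1)\}$, or equivalently by a supporting-hyperplane argument in $\mathfrak{P}_4^{\ast}$ producing a nondegenerate evaluation functional $\mathfrak{g}\mapsto\Phi^\alpha_{\mathfrak{g}}(x,y)$ with $\alpha\in(0,1)$ that vanishes on $\mathfrak{f}$.
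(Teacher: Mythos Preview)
Your reverse direction is correct and matches the paper's: both subtract $\epsilon\,\mathfrak{p}_{(2^2)}$ and use that $\Phi_{(2^2)}(\alpha,1-\alpha,x_0,y_0)=(\alpha x_0^2+(1-\alpha)y_0^2)^2>0$ whenever $\alpha\in(0,1)$ and $(x_0,y_0)\neq 0$.

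For the forward direction you leave a genuine gap. Your compactness argument on witnesses $(\alpha_n,x_n,y_n)$ correctly isolates the bad ``universal-zero'' limits $(0,\pm1,0)$ and $(1,0,\pm1)$, but does not dispose of them: the supporting-hyperplane fix presupposes that every extreme ray of $\mathfrak{P}_4^{*}$ is an evaluation functional with $\alpha\in(0,1)$, which is essentially the statement you are trying to prove, and ``refining the witnesses'' would require an asymptotic analysis of $\Phi^\alpha_{\mathfrak{g}}$ near those four corners that you have not carried out. The paper avoids this by arguing the contrapositive instead of via limits of witnesses: if $\Phi^\alpha_{\mathfrak{f}}$ has no real zero for any $\alpha\in(0,1)$ then it is strictly positive there, and in particular $\Phi^{1/2}_{\mathfrak{f}}(1,1)=\sum_\lambda c_\lambda>0$. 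The structural identity
\[
\Phi^0_{\mathfrak{g}}(x,y)=\Phi^{1/2}_{\mathfrak{g}}(1,1)\,y^4,\qquad \Phi^1_{\mathfrak{g}}(x,y)=\Phi^{1/2}_{\mathfrak{g}}(1,1)\,x^4,
\]
valid for \emph{every} $\mathfrak{g}\in\mathfrak{M}_4$, then shows that the endpoint forms $\Phi^0_{\tilde{\mathfrak{f}}}$ and $\Phi^1_{\tilde{\mathfrak{f}}}$ stay nonnegative for any small perturbation $\tilde{\mathfrak{f}}$; combined with strict positivity on the open strip this places a full neighborhood of $\mathfrak{f}$ inside $\mathfrak{P}_4$, contradicting $\mathfrak{f}\in\partial\mathfrak{P}_4$. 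Your diagonal observation in the case $\sum_\lambda c_\lambda=0$ is exactly this identity read in reverse; what your argument is missing is its perturbative use when $\sum_\lambda c_\lambda>0$ to control the endpoints directly, rather than trying to steer sequential witnesses away from them.
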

\begin{proof}
Let $\mathfrak{f}\in\partial\mathfrak{P}_4$. Suppose that for all
$\alpha\in(0,1)$ the bivariate form
$\Phi_{\mathfrak{f}}^\alpha$ has no double real roots.
From Proposition \ref{le:ph1} we know that
$\Phi_{\mathfrak{f}}^\alpha$ is a non-negative
form for all $\alpha \in [0,1]$ and thus $\Phi_{\mathfrak{f}}^\alpha$ is strictly positive for all $\alpha \in (0,1)$. Thus that for a sufficiently small
perturbation $\tilde{\mathfrak{f}}$ of the coefficients $c_{\lambda}$ of $\mathfrak{f}$
form $\Phi_{\tilde{\mathfrak{f}}}^\alpha$ will remain positive for all $\alpha \in (0,1)$. Now we deal with the cases $\alpha=0,1$.

We observe that for all $\mathfrak{g} \in H^\rho_{\infty,4}$ we have
$$\Phi^0_\mathfrak{g}(x,y)=\Phi^{1/2}_\mathfrak{g}(y,y)=\Phi^{1/2}_\mathfrak{g}(1,1)y^4 \quad \text{and} \quad \Phi^1_\mathfrak{g}(x,y)=\Phi^{1/2}_\mathfrak{g}(1,1)X^4.$$
By the above we must have $\Phi^{1/2}_\mathfrak{g}(1,1)>0$ and the same will be true for a sufficiently small
perturbation $\tilde{\mathfrak{f}}$ of $\mathfrak{f}$.
But then it follows by Proposition \ref{le:ph1} that a neighborhood of
$\mathfrak{f}$ is in $\mathfrak{P}_4$, which contradicts the
assumption that $\mathfrak{f}\in\partial\mathfrak{P}_4$. Therefore
there exists $\alpha\in(0,1)$ such that
$\Phi^\alpha_{\mathfrak{{f}}}(x,y)$ has a
double real root.

Now suppose $\mathfrak{f} \in \mathfrak{P}_4$ and $\Phi_{\mathfrak{f}}^\alpha(x,y)$ has a double real root for some $\alpha \in (0,1)$. Let $\mathfrak{f}_{\epsilon}=f-\epsilon \mathfrak{p}_{2^2}$. It follows that for all $\epsilon>0$ we have $\mathfrak{f}_\epsilon \notin \mathfrak{P}_4$, since $\Phi_{\mathfrak{f}_\epsilon}^{\alpha}$ is  strictly negative at the double zero of $\Phi_{\mathfrak{f}}^{\alpha}$. Thus $\mathfrak{f}$ is on the boundary of $\mathfrak{P}_4$.

\end{proof}

We now deduce the following Corollary from Theorem \ref{thm:decom} completely describing
polynomials belonging to $\mathfrak{S}_4$:
\begin{cor}\label{cor:SOS4}
We have $\mathfrak{f}\in\mathfrak{S}_4$ if and only if
$$\mathfrak{f}=\alpha_{11}
\mathfrak{p}_{(1^4)}+2\alpha_{12}\mathfrak{p}_{(2,1^2)}+\alpha_{22}\mathfrak{p}_{(2^2)}+\beta_{11}\left(\mathfrak{p}_{(2,1^2)}-\mathfrak{p}_{(1^4)}\right)+2\beta_{12}\left(\mathfrak{p}_{(3,1)}-\mathfrak{p}_{(2,1^2)}\right)+\beta_{22}\left(\mathfrak{p}_{(4)}-\mathfrak{p}_{(2^2)}\right),$$
where the matrices $\begin{pmatrix}
\alpha_{11}&\alpha_{12}\\
\alpha_{12}&\alpha_{22}
\end{pmatrix}
$ and $\begin{pmatrix}
\beta_{11}&\beta_{12}\\
\beta_{12}&\beta_{22}
\end{pmatrix}$
are positive semidefinite.
.
\end{cor}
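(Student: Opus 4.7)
The plan is to combine Theorem \ref{thm:decom} with Theorem \ref{THM LIMIT}, showing that the $\gamma$-term in the $n$-dependent SOS decomposition becomes redundant as $n \to \infty$. Let $C \subseteq \mathfrak{M}_4$ denote the cone described on the right-hand side of the corollary. First I would settle the easy direction: given PSD matrices $(\alpha_{ij})$ and $(\beta_{ij})$, Theorem \ref{thm:decom} applied with $\gamma = 0$ shows $f^{(n)} \in \Sigma^S_{n,4}$ for every $n \geq 4$, hence $C \subseteq \mathfrak{S}_4$.

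For the reverse inclusion, let $\Psi_n \in \mathfrak{M}_4$ denote the coefficient vector (in the $p_\lambda^{(n)}$ basis) of the polynomial multiplying $\gamma$ in Theorem \ref{thm:decom}. A direct inspection yields
\[
\Psi_\infty := \lim_{n\to\infty}\Psi_n = \tfrac{1}{2}\mathfrak{p}_{(1^4)} - \mathfrak{p}_{(2,1^2)} + \tfrac{1}{2}\mathfrak{p}_{(2^2)} = \tfrac{1}{2}(p_1^2 - p_2)^2,
\]
and the crucial observation is that $\Psi_\infty$ already lies in $C$ via the PSD matrix $\tfrac{1}{2}\left(\begin{smallmatrix}1 & -1 \\ -1 & 1\end{smallmatrix}\right)$ for the $\alpha$-block.

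Now take $\mathfrak{f} \in \mathfrak{S}_4$. For each $n \geq 4$, Theorem \ref{thm:decom} provides a decomposition $\mathfrak{f} = g^{(n)} + \gamma^{(n)}\Psi_n$ with $g^{(n)} \in C$ and $\gamma^{(n)} \geq 0$. I will show the sequence $\gamma^{(n)}$ is bounded: if $\gamma^{(n_k)} \to \infty$ along some subsequence, then since $\mathfrak{f}$ is fixed, the normalization $g^{(n_k)}/\gamma^{(n_k)} = \mathfrak{f}/\gamma^{(n_k)} - \Psi_{n_k}$ converges to $-\Psi_\infty$. But $C$ is closed (it is the image of $\mathrm{PSD}_2 \times \mathrm{PSD}_2$ under a linear map whose kernel meets this cone only at the origin, the standard criterion for image-closedness in finite dimensions), so $-\Psi_\infty$ would lie in $C \subseteq \mathfrak{P}_4$. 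This contradicts the fact that for every $m \geq 4$ the realization of $-\Psi_\infty$ is $-\tfrac{1}{2}((p_1^{(m)})^2 - p_2^{(m)})^2$, which is not pointwise nonnegative.

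Once $\gamma^{(n)}$ is bounded, pass to a convergent subsequence $\gamma^{(n_k)} \to \gamma_\infty \geq 0$. Then $g^{(n_k)} \to \mathfrak{f} - \gamma_\infty\Psi_\infty$ lies in $C$ by closedness, and since $\gamma_\infty\Psi_\infty \in C$ as well, adding gives $\mathfrak{f} \in C$. The main obstacle is this boundedness argument for $\gamma^{(n)}$, handled by the recession-cone calculation above exploiting that $-\Psi_\infty$ is not pointwise nonnegative; the rest reduces to computations already carried out in Theorem \ref{thm:decom}.
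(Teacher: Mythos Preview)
Your proposal is correct and rests on the same key observation as the paper: the $(n-2,2)$ contribution $\Psi_n$ converges to $\Psi_\infty=\tfrac{1}{2}\mathfrak{p}_{(1^4)}-\mathfrak{p}_{(2,1^2)}+\tfrac{1}{2}\mathfrak{p}_{(2^2)}$, which already lies in the cone $C$ generated by the $\alpha$- and $\beta$-blocks. The paper's proof records exactly this computation and then simply asserts that the $(n-2,2)$ squares ``do not contribute anything in the limit,'' implicitly leaning on the general limit machinery of Theorem~\ref{THM LIMIT} (and its proof in Section~\ref{SEC Symsos}) to justify that $\mathfrak{S}_4=\lim_n\Sigma^S_{n,4}$ equals $C$.

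Your argument is more self-contained: instead of invoking Theorem~\ref{THM LIMIT}, you work directly from the definition $\mathfrak{S}_4=\bigcap_n\Sigma^S_{n,4}$ and supply the compactness step the paper omits, namely the boundedness of $\gamma^{(n)}$ via the recession-direction contradiction $-\Psi_\infty\notin\mathfrak{P}_4$, together with the verification that $C$ is closed (kernel of the linear map meeting $\mathrm{PSD}_2\times\mathrm{PSD}_2$ only at the origin). This buys you independence from the representation-theoretic limit analysis of Section~\ref{SEC Symsos}; the paper's route buys brevity at the cost of relying on that earlier machinery. Both are valid, and the essential idea is the same.
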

\begin{proof}
We observe from Theorem \ref{thm:decom} that the coefficients of the squares of symmetric polynomials
and of  $(n-1,1)$ semi-invariants do not depend on $n$. Thus the cone generated by these sums of squares
is the same for any $n$, and it corresponds precisely to the cone given in the statement of the Corollary. Now observe that the limit of the square of
the $(n-2,2)$ component is equal to
$\frac{1}{2}\mathfrak{p}_{(1^4)}-\mathfrak{p}_{(21^2)}+\frac{1}{2}\mathfrak{p}_{(2^2)}$,
which is a sum of symmetric squares. Thus the squares from the $(n-2,2)$ component do not contribute anything in the limit.
\end{proof}

In order to algebraically characterize the elements on the boundary
recall that the discriminant $\text{disc}(f)$ of a bivariate form $f$ is a homogeneous
polynomial in the coefficients of $f$, which vanishes exactly on the set of
forms with multiple projective roots.

However, note that $\text{disc}(f)=0$
alone  does not guarantee that $f$ has a double real root, since the
double root may be complex.

\begin{prop}
Let $\mathfrak{f}\in H^\rho_{\infty,4}$ be of the from $$\mathfrak{f}=a^2\mathfrak{p}_{(1^4)}+b_{11}\left(\mathfrak{p}_{(2,1^2)}-\mathfrak{p}_{(1^4)}\right)+2b_{12}\left(\mathfrak{p}_{(3,1)}-\mathfrak{p}_{(2,1^2)}\right)+b_{22}\left(\mathfrak{p}_{(4)}-\mathfrak{p}_{(2^2)}\right),$$ such that the  coefficients meet the conditions in \eqref{eq:coef2}. Then, for $\alpha=\frac{1}{2}$ the associated form $\Phi_{\mathfrak{f}}^{\alpha}(X,Y)$ has a double root at $(x,y)=(1,-1)$.
\end{prop}

\begin{lemma}\label{le:calc}
Let $\mathfrak{f}\in H^\rho_{\infty,4}$ be of the from $$\mathfrak{f}=a^2\mathfrak{p}_4+2ab\mathfrak{p}_{31}+(c^2-a^2)\mathfrak{p}_{22}+(2cd+b^2-2ab)\mathfrak{p}
_{211}+(d^2-b^2)\mathfrak{p}_{1111},$$ such that the coefficients $a,b,c,d$
meet the conditions in \eqref{eq:coef}. Consider the  associated form $\Phi_{\mathfrak{f}}^{\alpha}$. Then, there is a value $\alpha$, with $0<\alpha<1$ such that  $\Phi_{\mathfrak{f}}^{\alpha}$ has a real double root. 
\end{lemma}
\begin{proof}
We first show that there is a (possibly complex) double root by examining the the discriminant. To this end, we find that this discriminant $\delta_{\mathfrak{f}}(\alpha)$ factors as
$$\delta_{\mathfrak{f}}(\alpha)=16(\alpha-1)^3(c+d)^2\alpha^3\delta_1(\alpha)\delta_2(\alpha)^2
,$$ 
where  $\delta_1$ and $\delta_2$ are quadratic polynomials in $\alpha$. We examine these factors $\delta_1$ and $\delta_2$ now assuming the conditions on $a,b,c,d$ imposed by \eqref{eq:coef}.

One easily checks  that $\delta_1(\alpha)=\delta_1(1-\alpha)$
  and
 $\delta_1(0)=\delta_1(1)=-16\,{a}^{2} \left( c+d \right) ^{2}<0.$ Further, $$\delta_1\left(\frac{1}{2}\right)=-\frac{1}{4}\, \left(
4\,{a}^{2}+4\,ab+4\,cd+4\,{c}^{2}+{b}^{2} \right) ^{2}<0.$$
 Which clearly implies that the quadratic polynomial $\delta_1$ is strictly negative on  $(0,1)$. Moreover, the conditions in  \eqref{eq:coef} yield 
 $\delta_2(0)=\delta_2(1)=a^2(c+d) >0$ and 
 $\delta_2\left(\frac{1}{2}\right)=\frac{1}{4}\,c \left( 2\,a+b \right) ^{2} <0 $
since $c$ is supposed to be negative.

 It follows now that 
$\delta_2(\alpha)$ has two real roots  $\alpha_{1},
1-\alpha_{1}\in(0,1)$ and hence  the polynomial
$\Phi_{\mathfrak{f}}(\alpha_{1/2},1-\alpha_{1/2},x,1)$ has a double
root.
However, it remains to verify that this double root is indeed real.
In order to establish this we examine  the polynomial $\delta_2(\alpha,a,b,d,c)$ more carefully.
We have  
$$ \delta_2(\alpha,a,b,c,d):={a}^{2}d+{a}^{2}c+4\,{\alpha}^{2}{a}^{2}d-4\,{a}^{2}\alpha
d-4\,ab{\alpha}^{2}c+4\,ab\alpha
c-{\alpha}^{2}{b}^{2}c+{b}^{2}\alpha c, 
$$
and for $\alpha\neq \frac{1}{2}$, one can solve for $d$ to find
$$d=-{{c \left( {a}^{2}-4\,ab{\alpha }^{2}+4\,ab\alpha -{\alpha
}^{2}{b}^{2}+{b}^{2}
\alpha  \right) }\left( 2a\,\alpha -a \right)^{-2}}.$$
This yields that
$\Phi_{\mathfrak{f}}(\alpha^{*},1-\alpha^{*},x,1)$ contains the factor
$\left( ax+a+b\alpha x-b\alpha +b \right) ^{2}$ and hence in this case
 $\Phi_{\mathfrak{f}}(\alpha^{*},1-\alpha^{*},x,1)$ has
real double root.

In the case $\alpha=\frac{1}{2}$ it follows from the observations made above, that at a root of $\delta_2(\alpha,a,b,c,d)$ also all second partial derivatives have to vanish.
By explicit calculations one finds that this can happen only if $a=-\frac{1}{2}$ in which case the polynomial $\delta_2$
specializes to $1/4\,c \left( -1+b \right) ^{2}$.
Since $c<0$ it follows that only for $b=1$ the discriminant can vanish.
In this situation one gets
$$\Phi_{\mathfrak{f}}(\frac{1}{2},\frac{1}{2},x,1)=\frac{1}{16}\,
\left( d+2\,dx+2\,c+2\,c{x}^{2}+d{x}^{2} \right) ^{2}$$
and it follows that  $X_{1/2}:=\pm { {-d+2\,\sqrt
{-{c}(c-d)}}{(2\,c+d){-1}}}$ are the two double roots in this case. The  conditions posed on $c,d$ by Theorem \ref{thm:boundary}
ensure that  $-{c}(c-d)\geq 0$, and hence these roots will also be real.
Therefore we have shown that in all cases the roots are indeed real.

\end{proof}
We are now in the position to show that $\mathfrak{P}_4=\mathfrak{S}_4$.
\begin{proof}[Proof of Theorem \ref{THM DEG4}]
Since $\mathfrak{S}_4\subset\mathfrak{P}_4$ and both sets are closed
convex cones, it suffices to show that  every $\mathfrak{f}$ on the boundary of $\mathfrak{S}_4$ also lies
in the boundary of $\mathfrak{P}_4$. It follows from  Theorem \ref{thm:boundary} that a sequence
$\mathfrak{f}:=(f^{(4)},f^{(5)},\ldots)$ in the boundary of 
$\mathfrak{S}_4$ which is not in the boundary of $\mathfrak{P}_4$ has to be a form as considered in Lemma \ref{le:calc}, but by combining Lemmas  \ref{le:calc} and \ref{le:rand} we find that $\mathfrak{f}\in\partial\mathfrak{S}_{4}$ implies that $\mathfrak{f}\in\partial\mathfrak{P}_4$, and we can conclude that
$\mathfrak{S}_4=\mathfrak{P}_4$.
\end{proof}

\section{Conclusion, open questions and acknowledgements}

Besides Conjecture \ref{CONJ MAIN}  there is another important question left open in our work. Corollary \ref{cor:SOS4} gave a description of the asymptotic
symmetric sums of squares cone in terms of the squares involved. In this description of the limit
not all semi-invariant polynomials were necessary. It is natural  to
investigate the situation also in arbitrary degree:

\begin{con}
Let $\mathfrak{f}\in\mathfrak{S}_{2d}$. What semi-invariant
polynomials are necessary for a description of $\mathfrak{f}$ as a sum
of squares?
\end{con}

The general setup of our work focused on the case of a fixed degree. Examples like the difference of the geometric and the arithmetic mean show however, that it would be very interesting to also understand the situation where the degree is not fixed.
\begin{con}
What can be said about the quantitative relationship between the cones $\Sigma^S_{n,2d}$ and $\mathcal{P}^S_{n,2d}$ in asymptotic regimes other than fixed degree $2d$?
\end{con}
 
This research was initiated  during the IPAM
program on
Modern Trends in Optimization and its Application and the authors would like
to thank the Institute for Pure and Applied Mathematics for the hospitality
during the program and the organizers of the program for the
invitation to participate int the program. We thank an anonymous referee  for helpful comments that greatly improved this paper and Roland Hildebrand for bringing the article by Terpstra to our attention. The second author acknowledges support of  the Troms\o~ Research foundation und grant agreement 17matteCRMarie.

\end{document}